\newtheorem{thm1}{Theorem}[section]
\newtheorem{theorem}[thm1]{Theorem}
\newtheorem{lemma}[thm1]{Lemma}
\newtheorem{corollary}[thm1]{Corollary}
\newtheorem{proposition}[thm1]{Proposition}
\newtheorem{thmx}{Theorem}
\newcounter{exampleendflag}
\newcommand{\exendhere}{
  \setcounter{exampleendflag}{0} 
  \ifmmode
    \eqno
    \ensuremath{\blacktriangle}
  \else
    \hspace{\stretch{1}}
    \ensuremath{\blacktriangle}
  \fi
}
\newenvironment{example}{
  \setcounter{exampleendflag}{1}
  \begin{exx}
}{
  \ifthenelse{\value{exampleendflag}=1}{\exendhere}{} 
  \end{exx}
}
\theoremstyle{definition}
\newtheorem{definition}[thm1]{Definition}
\theoremstyle{remark}
\newtheorem{remark}[thm1]{Remark}
\newtheorem{exx}[thm1]{Example}
\newtheorem{ex2}[thm1]{Example}
\newcommand{\fF}{\mathcal{F}}
\newcommand{\fR}{\mathcal{R}}
\newcommand{\sym}{\operatorname{Sym}}
\newcommand{\coker}{\operatorname{coker}}
\newcommand{\moda}{\mathbf{{Mod}}_A}
\newcommand{\funa}{\mathbf{{Fun}}_A}
\newcommand{\im}{{\operatorname{im}}}  
\renewcommand{\hom}{\operatorname{Hom}}
\newcommand{\id}{\operatorname{id}}
\newcommand{\coeq}{\operatorname{coeq}}
\renewcommand{\phi}{\varphi}
\renewcommand{\epsilon}{\varepsilon}
\numberwithin{equation}{section}
\author{Gustav S{\ae}d{\'e}n St{\aa}hl}
\address{Department of Mathematics, KTH Royal Institute of Technology, SE-100 44 Stockholm, Sweden}
\email{gss@math.kth.se}
\title{Rees algebras of modules and coherent functors}
\begin{document}
\begin{abstract}
We show that several properties of the theory of Rees algebras of modules become more transparent using the category of coherent functors rather than working directly with modules. In particular, we show that the Rees algebra is induced by a canonical map of coherent functors.
\end{abstract}

\subjclass[2010]{Primary 13A30, Secondary  13C12}
\keywords{Rees algebra, divided powers, coherent functors}
\thanks{The author is supported by the Swedish Research Council, grant number 2011-5599.}

\maketitle
\section*{Introduction}
In \cite{reesbud}, the authors give a definition of the Rees algebra of a finitely generated module over a noetherian ring. This definition was also studied in \cite{jag1}, where we showed that the Rees algebra $\fR(M)$ of a finitely generated module $M$ is equal to the image of a canonical map ${\sym(M)\to\Gamma(M^\ast)^\vee}$ from the symmetric algebra of $M$ to the graded dual of the algebra of divided powers of the dual of the module $M$. In this paper, we use coherent functors to obtain nice characterizations of properties of the Rees algebra that are not available in the category of modules. Two of these results are summarized in Theorems~\ref{thm:intro2} and~\ref{thm:introphi}. 

For any finitely generated module $M$ over a noetherian ring $A$, we consider the  functors ${t_M=M\otimes_A(-)}$ and $h^M=\hom_A(M,-)$. There is a canonical map $t_M\to h^{M^\ast}$, and we introduce the functor \[{r}_M=\im\bigl(t_M\to h^{M^\ast}\bigr).\]
\begin{thmx}\label{thm:intro2}
Let $A$ be a noetherian ring and let $M\to N$ be a homomorphism of finitely generated $A$-modules. If the induced morphism ${r}_M\to{r}_N$ is injective (resp. surjective), then $\fR(M)\to\fR(N)$ is injective (resp. surjective).
\end{thmx}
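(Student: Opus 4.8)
The plan is to compare the graded algebras degree by degree and to recognize each homogeneous component as the functor $\fG$ evaluated at $A$. First I would invoke the classical duality $(\sym^d M)^\ast\cong\Gamma^d(M^\ast)$ between symmetric and divided powers, valid over an arbitrary commutative ring. Dualizing it yields a natural isomorphism $(\Gamma^d M^\ast)^\vee\cong(\sym^d M)^{\ast\ast}$ under which the degree-$d$ component of the canonical map $\sym(M)\to\Gamma(M^\ast)^\vee$ is identified with the biduality map $\sym^d M\to(\sym^d M)^{\ast\ast}$. Hence $\fR(M)_d=\im\!\bigl(\sym^d M\to(\sym^d M)^{\ast\ast}\bigr)=\fG_{\sym^d M}(A)$, the last equality being the definition of $\fG$ at $A$. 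By naturality of all of this in $M$, the degree-$d$ part of $\fR(M)\to\fR(N)$ is exactly $\fG_{\sym^d M}(A)\to\fG_{\sym^d N}(A)$, induced by $\sym^d(M\to N)$. The theorem is thus reduced to transferring the injectivity or surjectivity of $\fG_M\to\fG_N$ to the maps $\fG_{\sym^d M}\to\fG_{\sym^d N}$ and then evaluating at $A$.

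For the surjective case this is immediate and uses only the value at $A$ in the lowest degree. Since $\fR(M)$ is by construction the image of $\sym(M)$, it is a graded quotient of a symmetric algebra and so is generated in degree one, with $\fR(M)_1=\fG_M(A)$. If $\fG_M\to\fG_N$ is surjective, then evaluating at $A$ gives a surjection $\fR(M)_1\to\fR(N)_1$; a graded algebra homomorphism that is onto in degree one, with target generated in degree one, is onto. Thus $\fR(M)\to\fR(N)$ is surjective.

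The injective case is the real obstacle, because symmetric powers do not preserve injections and so the degree-one input cannot by itself control higher components; this is precisely why the hypothesis is functorial, i.e.\ injectivity of $\fG_M\to\fG_N$ on every test object $B$ rather than only on $A$. Writing $f\colon M\to N$ and using the surjection $M^{\otimes d}\twoheadrightarrow\sym^d M$, one checks that injectivity of $\fR(M)_d\to\fR(N)_d$ is equivalent to the orthogonality statement: for every $u\in M^{\otimes d}$, if $u$ annihilates $\im(\Gamma^d f^\ast)\subseteq\Gamma^d M^\ast$ under the pairing $M^{\otimes d}\times\Gamma^d M^\ast\to A$, then $u$ annihilates all of $\Gamma^d M^\ast$. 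In degree one this ``orthogonal density'' of $\im(f^\ast)$ in $M^\ast$—now with values in an arbitrary $B$—is exactly what injectivity of $\fG_M\to\fG_N$ provides. I would therefore bootstrap from degree one to degree $d$ by feeding the functorial hypothesis the tensor-power test objects $B=M^{\otimes k}$, using the factorization of the biduality pairing into $d$ successive applications of the degree-one map $M\otimes(-)\to\hom(M^\ast,-)$. The hard point, where the work really concentrates, is reconciling this iteration with the passage from $M^{\otimes d}$ to its $S_d$-symmetrization, equivalently with the inclusion $\Gamma^d M^\ast\hookrightarrow(M^\ast)^{\otimes d}$ of divided powers as symmetric tensors: one must ensure that the orthogonal density survives symmetrization, and it is here that evaluating the coherent functors at all $B$, and not merely at $A$, is indispensable.
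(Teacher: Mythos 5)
Your argument for the surjective case is correct and is essentially the paper's: $\fR(M)$ is generated in degree one with $\fR(M)_1=M^{tl}=\fG_M(A)$, so surjectivity of $\fG_M\to\fG_N$, evaluated at $A$, forces surjectivity of $\fR(M)\to\fR(N)$.

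The injective case has a genuine gap, on two counts. First, the ``classical duality'' you invoke, $(\sym^d M)^\ast\cong\Gamma^d(M^\ast)$ naturally for every finitely generated $M$ over every commutative ring, is false; it holds for finitely generated \emph{projective} modules. For example, over $A=k[x]/(x^2)$ with $k$ of characteristic $2$ and $M=A/(x)$, one has $M^\ast\cong A/(x)$, hence $\Gamma^2(M^\ast)\cong A/(2x)=A$ is two-dimensional over $k$, while $(\sym^2M)^\ast\cong(A/(x))^\ast\cong(x)$ is one-dimensional. Consequently the identification $\fR(M)_d=\im\bigl(\sym^dM\to(\sym^dM)^{\ast\ast}\bigr)=\fG_{\sym^dM}(A)$, on which your whole degree-by-degree reduction rests, is unjustified (and I do not believe it holds in general; the paper only establishes $\fR(M)_1=M^{tl}$). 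Second, even granting that reduction, you do not complete the argument: the passage from the iterated degree-one pairing on $M^{\otimes d}$ to its $S_d$-symmetrization is exactly where you say ``the work really concentrates,'' and it is left open. The step you are missing is much cheaper than the one you are attempting: the category of coherent functors carries an \emph{exact} contravariant duality $\vee$ with $(h^M)^\vee=t_M$, so injectivity of $\fG_M\to\fG_N$ dualizes to surjectivity of $\fG_N^\vee\to\fG_M^\vee$, and since $\fG_M^\vee=\im(t_{M^\ast}\to h^M)$ has global sections $\fG_M^\vee(A)=M^\ast$, evaluating at $A$ gives that $N^\ast\to M^\ast$ is surjective. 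From there one never touches individual symmetric powers: $\Gamma$ preserves surjections, so $\Gamma(N^\ast)\to\Gamma(M^\ast)$ is surjective in every degree, the graded dual gives an injection $\Gamma(M^\ast)^\vee\hookrightarrow\Gamma(N^\ast)^\vee$, and restricting to $\fR(M)=\im\bigl(\sym(M)\to\Gamma(M^\ast)^\vee\bigr)$ yields the injection $\fR(M)\hookrightarrow\fR(N)$. In short: dualize the functors first and work with $\Gamma$ of the dual modules, rather than trying to dualize symmetric powers degree by degree.
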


In particular, letting $N=F$ be a free module, a homomorphism $M\to F$ that induces an injection of functors ${r}_M\to{r}_F$ is a versal map in the terminology of \cite{reesbud}. Given such a map, the theorem implies that  ${\fR(M)\to\fR(F)=\sym(F)}$ is injective, recovering another result of \cite{reesbud}, namely that the Rees algebra of $M$ can be computed as the image of  the map $\sym(M)\to\sym(F)$.

Another result we obtain, that is not available in the category of modules, is the following.
\begin{thmx}\label{thm:introphi}
There is a functor $\Phi$ from the category of coherent functors to the category of finitely generated and graded algebras over $A$ such that ${\Phi(t_M)=\sym(M)}$ and ${\Phi(h^M)=\im\bigl(\sym(M^\ast)\to\Gamma(M)^\vee\bigr)}$. 
\end{thmx}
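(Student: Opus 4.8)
The plan is to construct $\Phi$ by prescribing it on the representable functors—which are precisely the projective objects of the abelian category of coherent functors—and then extending it to all coherent functors by right exactness. Recall that every coherent functor $F$ admits a presentation $h^{M_1}\to h^{M_0}\to F\to 0$ with $M_0,M_1$ finitely generated, that $h^M$ is projective (by Yoneda, $\hom(h^M,-)$ is evaluation at $M$, which is exact), and that $t_M$ is itself coherent, obtained by tensoring a free presentation of $M$, so that both families in the statement lie in the domain. For a finitely generated $M$ I would define $\Phi(h^M)=\im\bigl(\sym(M^\ast)\to\Gamma(M)^\vee\bigr)$, the prescribed value: it is a graded subalgebra of $\Gamma(M)^\vee$ and a graded quotient of $\sym(M^\ast)$. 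Here $\sym(M^\ast)\to\Gamma(M)^\vee$ is the canonical homomorphism of graded $A$-algebras arising from the natural pairing of the symmetric algebra of $M^\ast$ with the divided power algebra of $M$ (cf.\ \cite{jag1}), which is natural in $M$; this makes $M\mapsto\Phi(h^M)$ a contravariant functor from finitely generated modules to finitely generated graded $A$-algebras, the finite generation being where the noetherian hypothesis on $A$ enters.

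To extend $\Phi$ to all of the functor category, I would use that $F=\coeq\bigl(\alpha,0\colon h^{M_1}\rightrightarrows h^{M_0}\bigr)$ for a presentation as above, and set $\Phi(F)=\coeq\bigl(\Phi(\alpha),\epsilon\colon\Phi(h^{M_1})\rightrightarrows\Phi(h^{M_0})\bigr)$ in the category of commutative graded $A$-algebras, where $\epsilon$ is the composite of the augmentation $\Phi(h^{M_1})\to A$ with the unit $A\to\Phi(h^{M_0})$. Concretely this coequalizer is the quotient of $\Phi(h^{M_0})$ by the ideal generated by the positive-degree part of $\im\Phi(\alpha)$. Since the category of graded $A$-algebras is cocomplete these coequalizers exist, and finite generation is preserved because one is taking a quotient of a finitely generated algebra.

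The principal obstacle is showing that $\Phi$ is well defined and functorial, i.e.\ independent of the chosen presentation. This is the standard argument that a functor prescribed on projective generators extends uniquely to a right-exact functor: any two presentations of $F$ are dominated by a common one, morphisms lift along the projectives $h^{M_i}$, and $\Phi$ on representables must be checked to carry the resulting homotopies to identifications of the associated coequalizers. I expect this verification—together with confirming that the objectwise coequalizer assembles into a genuine functor in $F$—to be the hard part, precisely because the target is a category of algebras rather than an abelian category, so one cannot appeal to additive machinery and must argue directly with coequalizers and augmentation ideals.

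Finally I would evaluate $\Phi$ on the two families. The identity $\Phi(h^M)=\im\bigl(\sym(M^\ast)\to\Gamma(M)^\vee\bigr)$ then holds by construction, once one checks that the right-exact extension restricts to the prescribed functor on projectives (using that $h^M=\coeq(0,0\colon 0\rightrightarrows h^M)$). For $\Phi(t_M)$ I would take a finite free presentation $A^m\xrightarrow{\psi}A^n\to M\to 0$; tensoring yields the presentation $h^{A^m}\to h^{A^n}\to t_M\to 0$, with connecting map corresponding under Yoneda to $\psi^\ast$. Since $\sym(A^k)\to\Gamma(A^k)^\vee$ is an isomorphism for free modules, $\Phi(h^{A^k})=\sym(A^k)$, and applying the construction gives $\Phi(t_M)=\coeq\bigl(\sym(\psi),\epsilon\colon\sym(A^m)\rightrightarrows\sym(A^n)\bigr)$, the map being induced by $\psi$ after the double-dual identification. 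Because $\sym$ is a left adjoint, hence right exact, this coequalizer is $\sym(\coker\psi)=\sym(M)$, completing the verification.
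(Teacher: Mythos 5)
Your construction coincides with the paper's: define $\Phi(h^M)=\im\bigl(\sym(M^\ast)\to\Gamma(M)^\vee\bigr)$ (the algebra the paper denotes $\gamma(M)$), extend to a coherent functor $\fF$ with presentation $h^N\xrightarrow{r}h^M\to\fF\to0$ by $\Phi(\fF)=\coeq\bigl(\Phi(r),\Phi(0)\bigr)$, and compute $\Phi(t_M)$ from a free presentation using that $t_F=h^{F^\ast}$ for $F$ free, that $\gamma(F^\ast)=\sym(F)$ there, and that $\sym$ preserves coequalizers; those parts are correct. But the step you explicitly defer --- showing that two lifts of the same morphism of coherent functors induce the same map of coequalizers, hence that $\Phi$ is well defined, independent of the presentation, and functorial --- is the actual technical content of the theorem, and you give no argument for it. You correctly observe that the target category of algebras is not additive, so the usual ``homotopic lifts induce equal maps'' argument is unavailable; identifying the obstacle is not the same as overcoming it, and this is a genuine gap.

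The paper's resolution is a concrete device you would need. Given two lifts $f_1,g_1\colon h^M\to h^P$ of $f$ with homotopy $l\colon h^M\to h^L$ satisfying $r_2\circ l=f_1-g_1$, one replaces the single map $r_2\colon h^L\to h^P$ by the pair $\pi_1=p_1+r_2\circ p_2$ and $\pi_2=p_1$ from $h^{P\oplus L}$ to $h^P$; then $\pi_1\circ(j_1\circ g_1+j_2\circ l)=f_1$ and $\pi_2\circ(j_1\circ g_1+j_2\circ l)=g_1$, so after applying $\Phi$ both $\Phi(f_1)$ and $\Phi(g_1)$ factor through the \emph{same} element of $\gamma(P\oplus L)$ via the two coequalized maps, hence agree after composing with the coequalizer projection. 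For this to be legitimate one must check that replacing the pair $(r_2,0)$ by $(\pi_1,\pi_2)$ does not change the coequalizer after applying $\Phi$ (Lemma~\ref{lem:detsomfixar}); that check relies crucially on $\gamma(L)$ being generated in degree $1$, so that the ideals defining both coequalizers are generated by images of degree-one elements, where additivity does hold. This is also precisely why the value on $h^M$ must be $\gamma(M)=\im\bigl(\sym(M^\ast)\to\Gamma(M)^\vee\bigr)$ rather than $\Gamma(M)^\vee$ itself: the paper notes the analogous lemma fails for $\Gamma(M)^\vee$, which is not generated in degree $1$ in general. Your degree-one description of the coequalizer ideal shows you are close to this point, but without the $h^{P\oplus L}$ trick the well-definedness argument does not close.
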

By the result of \cite{jag1}, this theorem shows that the Rees algebra of a module $M$ is equal to the image of the map given by applying $\Phi$ to the canonical map $t_M\to h^{M^\ast}$, that is, \[\fR(M)=\im\Bigl(\Phi\bigl(t_M\bigr)\to\Phi\bigl(h^{M^\ast}\bigr)\Bigr).\]

\noindent\textbf{Background.} 
The Rees algebra of an ideal is a fundamental object in algebraic geometry. As ideals are special cases of modules, it is reasonable to ask if the Rees algebra generalizes in some natural way to the case of modules. In \cite{reesbud}, such a generalization was presented by defining the Rees algebra of finitely generated modules over noetherian rings in terms of maps to free modules. In \cite{jag1}, we expanded on these ideas and found an intrinsic definition of the Rees algebra in terms of the algebra of divided powers. 

A seemingly unrelated topic is the theory of coherent functors introduced by Auslander \cite{auslander}, and also studied by Hartshorne \cite{coherenthart}. These objects have been used for describing various results, such as Schlessinger's approach to infinitesimal deformation theory \cite{schartin}, and Hall's reformulation of Artin's criterion for the algebraicity of a stack~\cite{2012arXiv1206.4182H}.

\par\vspace{\baselineskip}
\noindent\textbf{Structure of the paper.} We start in Section~\ref{sec:1} by reviewing some results of Rees algebras of modules. In particular, we state some results concerning the notion of versal maps. 

In Section~\ref{sec:torsionless} we discuss the torsionless quotient of a module $M$, which is defined as the image of the canonical map from $M$ to its double dual. We find that any versal map will factor through this quotient and show that it is connected to the Rees algebra of $M$.

Sections~\ref{sec:verfunc} and \ref{sec:tf} are focused on the study of relations between coherent functors, such as $h^M=\hom_A(M,-)$ and $t_M=M\otimes_A(-)$, and the Rees algebra. We give a characterization of versal maps in terms of coherent functors and find that their defining properties become more transparent in this setting. After generalizing the concept of the torsionless quotient of a module $M$ to a torsionless functor ${r}_M$, we give a proof of Theorem~\ref{thm:intro2} and we also show that a versal map $M\to F$ is equivalent to an injection ${r}_M\hookrightarrow{r}_F$. 

Finally, in Section~\ref{sec:functorphi} we prove Theorem~\ref{thm:introphi} by constructing the functor $\Phi$ from the category of coherent functors to the category of $A$-algebras, and in doing so we show that the Rees algebra of a module $M$ is induced by a natural map $t_M\to h^{M^\ast}$ of coherent functors. 




\par\vspace{\baselineskip}
\noindent\textbf{Acknowledgement.} I am very thankful to David Rydh for all his help and encouragement, as well as to all of our discussions. Also, I am thankful to Roy Skjelnes for his helpful input on this text. Finally, I thank Runar Ile for his many useful comments. 

\section{Versal maps}\label{sec:vers}\label{sec:1}
Throughout this paper, $A$ will denote a noetherian ring. 
Our main object of study is the Rees algebra of a finitely generated module over a noetherian ring that was defined by Eisenbud, Huneke and Ulrich in \cite{reesbud}.
\begin{definition}[{\cite[Definition~0.1]{reesbud}}]\label{def:rees}
Let $M$ be a finitely generated $A$-module. We define the \emph{Rees algebra of $M$} as the quotient ring
\[\fR(M)= \sym(M)/{\cap}_gL_g\]
where the intersection is taken over all homomorphisms $g\colon M\to E$ where $E$ runs over all free modules and $L_g=\ker\bigl(\sym(g)\colon\sym(M)\to\sym(E)\bigr)$.
\end{definition}

In \cite{jag1}, we showed the following equivalent definition.
\begin{theorem}[{\cite[Theorem~4.2]{jag1}}]\label{thm:intrinsicdef}
Let $M$ be a finitely generated $A$-module. The Rees algebra $\fR(M)$ of $M$, as defined in \cite{reesbud}, is equal to the image of the canonical map
\[\sym(M)\to\Gamma(M^\ast)^\vee\]
where  $\sym(M)$ denotes the symmetric algebra of $M$ and $\Gamma(M^\ast)^\vee$ denotes the graded dual of the algebra of divided powers $\Gamma(M^\ast)$ of the dual of the module~$M$.
\end{theorem}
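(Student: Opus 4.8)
The plan is to identify the kernel of the canonical map $\sym(M)\to\Gamma(M^\ast)^\vee$ with the ideal $\bigcap_g L_g$ of Definition~\ref{def:rees}. Since every map involved is a homomorphism of graded $A$-algebras and each $L_g$ is a graded ideal, I would work degree by degree. In degree $n$ the canonical map is the map $\sym_n(M)\to\Gamma_n(M^\ast)^\vee=\hom_A(\Gamma_n(M^\ast),A)$ sending $x$ to $\langle x,-\rangle$, where $\langle\,,\rangle\colon\sym_n(M)\times\Gamma_n(M^\ast)\to A$ is obtained by descending the evaluation pairing $M^{\otimes n}\times(M^\ast)^{\otimes n}\to A$ to the $S_n$-coinvariants of $M^{\otimes n}$ and the $S_n$-invariants of $(M^\ast)^{\otimes n}$. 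Two properties of this pairing form the backbone of the argument: (i) for a finitely generated free module $E$ the induced map $\sym_n(E)\to\Gamma_n(E^\ast)^\vee$ is an isomorphism, this being the classical duality between symmetric and divided powers of finite free modules; and (ii) the pairing is natural, so that $\langle\sym_n(g)(x),\eta\rangle_E=\langle x,\Gamma_n(g^\ast)(\eta)\rangle_M$ for every homomorphism $g\colon M\to E$, every $x\in\sym_n(M)$ and every $\eta\in\Gamma_n(E^\ast)$, which is simply the adjointness of $g^{\otimes n}$ and $(g^\ast)^{\otimes n}$ passed to (co)invariants.

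For the inclusion $\ker\subseteq\bigcap_g L_g$, I would fix a homomorphism $g\colon M\to E$ with $E$ free and use (ii) to obtain a commutative square relating the canonical maps for $M$ and for $E$ through $\sym_n(g)$ and $\Gamma_n(g^\ast)^\vee$. If $x$ lies in the kernel of the canonical map for $M$, a diagram chase shows that the image of $\sym_n(g)(x)$ under the canonical map for $E$ vanishes; since that map is injective by (i), we get $\sym_n(g)(x)=0$, that is, $x\in L_g$. As $g$ is arbitrary this gives $\ker\subseteq\bigcap_g L_g$.

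The reverse inclusion is the substantive one. Because $A$ is noetherian and $M$ is finitely generated, $M^\ast$ is finitely generated; choosing generators $\phi_1,\dots,\phi_s$ and setting $g_0=(\phi_1,\dots,\phi_s)\colon M\to A^s$ makes $g_0^\ast\colon(A^s)^\ast\to M^\ast$ surjective, since $g_0^\ast(e_i^\ast)=\phi_i$. Using the standard fact that the divided power algebra of a module is generated as an $A$-algebra by the divided powers of a generating set, the algebra map $\Gamma_n(g_0^\ast)\colon\Gamma_n(A^s)\to\Gamma_n(M^\ast)$ is surjective. Now if $x\in\bigcap_g L_g$ then in particular $\sym_n(g_0)(x)=0$, so for any $\psi\in\Gamma_n(M^\ast)$, writing $\psi=\Gamma_n(g_0^\ast)(\tilde\psi)$ and applying (ii) yields $\langle x,\psi\rangle=\langle\sym_n(g_0)(x),\tilde\psi\rangle=0$. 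Hence $x$ pairs to zero against all of $\Gamma_n(M^\ast)$, i.e.\ $x\in\ker$. Combining the two inclusions gives $\bigcap_g L_g=\ker\bigl(\sym(M)\to\Gamma(M^\ast)^\vee\bigr)$, and therefore $\fR(M)=\sym(M)/\bigcap_g L_g=\im\bigl(\sym(M)\to\Gamma(M^\ast)^\vee\bigr)$.

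The \textbf{main obstacle} is establishing (i) and (ii) cleanly over an arbitrary noetherian base ring, in particular the perfectness of the symmetric/divided-power pairing for finite free modules and the surjectivity of $\Gamma_n(g_0^\ast)$; these depend on the structure theory of divided power algebras rather than on any special feature of $M$, and care is needed because $\Gamma_n$ is not right exact in general, so the surjectivity must be deduced from the generation statement rather than from exactness. A pleasant byproduct of the third paragraph is that the single map $g_0$ already computes the kernel, so in fact $\bigcap_g L_g=L_{g_0}$.
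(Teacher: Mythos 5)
Your proposal cannot be compared line by line with a proof in this paper, because the paper gives none: Theorem~\ref{thm:intrinsicdef} is imported from \cite{jag1} as a known result. The argument that the paper's own machinery suggests (and that \cite{jag1} follows) is the versal-map route: pick a versal $\phi\colon M\to F$, so $\fR(M)=\im\bigl(\sym(M)\to\sym(F)\bigr)$ by Proposition~\ref{prop:versalgerrees}; since $F^\ast\to M^\ast$ is surjective and $\Gamma$ preserves surjections, taking graded duals gives an injection $\Gamma(M^\ast)^\vee\hookrightarrow\Gamma(F^\ast)^\vee\cong\sym(F)$ (the isomorphism being Theorem~\ref{thm:gammadualissym}), and the factorization $\sym(M)\to\Gamma(M^\ast)^\vee\hookrightarrow\sym(F)$ identifies the two images. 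Your route is different and more self-contained: you verify directly from Definition~\ref{def:rees} that $\ker\bigl(\sym(M)\to\Gamma(M^\ast)^\vee\bigr)=\bigcap_gL_g$, without invoking the Eisenbud--Huneke--Ulrich theorem that a versal map computes $\fR(M)$; indeed your $g_0$ is exactly a versal map by Proposition~\ref{prop:versalgerrees}.\ref{item:vers2}, and your ``byproduct'' $\bigcap_gL_g=L_{g_0}$ reproves that theorem. Both arguments ultimately rest on the same two pillars: perfectness of the $\sym$/$\Gamma$ pairing for finitely generated free modules, and surjectivity of $\Gamma_n$ applied to a map with surjective dual.

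One genuine imprecision must be repaired. You define the pairing by identifying $\Gamma_n(M^\ast)$ with the $S_n$-invariants $\bigl((M^\ast)^{\otimes n}\bigr)^{S_n}$. This identification holds for flat modules but fails in general (over $\Z$ one has $\Gamma_2(\Z/2)\cong\Z/4$ while the invariants are $\Z/2$), and the invariants functor does \emph{not} preserve surjections --- which is exactly the property you need in your third paragraph. So, read literally, your first paragraph defines a map out of $\sym_n(M)$ using one functor, while your third paragraph argues surjectivity for a different one. The fix is routine: define the pairing through the natural map $\Gamma_n(M^\ast)\to\bigl((M^\ast)^{\otimes n}\bigr)^{S_n}$, i.e.\ the degree-$(1,\dots,1)$ component of the iterated comultiplication of $\Gamma(M^\ast)$, and check that the resulting graded map $\sym(M)\to\Gamma(M^\ast)^\vee$ agrees with the canonical one of the theorem (the algebra map induced by $M\to M^{\ast\ast}$ via the universal property of $\sym$). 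With that definition, your naturality (ii) holds, your surjectivity argument uses the genuine $\Gamma_n$ and is correct, and (i) for finitely generated free modules is Theorem~\ref{thm:gammadualissym}. A last small point: in Definition~\ref{def:rees} the free modules $E$ need not be finitely generated, so in your first inclusion you should first factor an arbitrary $g\colon M\to E$ through a finitely generated free direct summand of $E$ before applying (i).
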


\begin{remark}\label{rem:psur}
Since the symmetric algebra is graded, the Rees algebra is also graded. Furthermore, the symmetric algebra preserves surjections and it follows that the Rees algebra does as well. 
\end{remark}

To compute the Rees algebra of a module $M$ the authors of \cite{reesbud} introduced the notion of a versal map.
\begin{definition}[{\cite[Definition~1.2]{reesbud}}]
Let $M$ be a finitely generated $A$-module and let~$F$ be a finitely generated and free $A$-module. A homomorphism $\phi\colon M\to F$ is \emph{versal} if every homomorphism $M\to E$, where $E$ is free, factors via $\phi$.
\end{definition}

We now state some basic results on versal maps. Proofs can be found in \cite{reesbud}. 
\begin{proposition}\label{prop:versalgerrees}
Let $M$ be a finitely generated $A$-module, let $F$ be a finitely generated and free $A$-module, and let $\phi\colon M\to F$ be a homomorphism.
\begin{enumerate}[(i)]
\item If $\phi$ is versal, then $\fR(M)=\fR(\phi),$
where \[\fR(\phi)={\im\bigl(\sym(\phi)\colon\sym(M)\to\sym(F)\bigr)}.\] 
\item \label{item:vers2} The map $\phi$ is versal if and only if the dual $\phi^\ast\colon F^\ast\to M^\ast$ is surjective.
\item \label{item:vers3} If $\phi$ is versal then it has a canonical factorization $M\to M^{\ast\ast}\hookrightarrow F$, where $M^{\ast\ast}\hookrightarrow F$ is injective.
\end{enumerate}
\end{proposition}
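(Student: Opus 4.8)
The plan is to treat the three parts in turn, since each reduces to a short manipulation once the right object is isolated.

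For part (i), I would unwind the definition of $\fR(\phi)$ and observe that $\fR(\phi)=\im\bigl(\sym(\phi)\bigr)\cong\sym(M)/L_\phi$, so that the claim becomes the identity $L_\phi=\cap_g L_g$ of ideals in $\sym(M)$. One inclusion is immediate: since $F$ is free, $\phi$ is itself one of the homomorphisms $g$ appearing in Definition~\ref{def:rees}, whence $\cap_g L_g\subseteq L_\phi$. For the reverse inclusion I would use versality: any $g\colon M\to E$ with $E$ free factors as $g=\psi\circ\phi$, so $\sym(g)=\sym(\psi)\circ\sym(\phi)$ and therefore $L_\phi=\ker\sym(\phi)\subseteq\ker\sym(g)=L_g$; intersecting over all such $g$ gives $L_\phi\subseteq\cap_g L_g$. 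Combining the two inclusions yields $\fR(M)=\fR(\phi)$.

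For part (ii), the forward direction is direct: if $\phi$ is versal then, since $A$ is free over itself, every $g\in M^\ast=\hom_A(M,A)$ factors as $g=\psi\circ\phi=\phi^\ast(\psi)$ for some $\psi\in F^\ast$, so $\phi^\ast$ is surjective. For the converse I would first reduce to finitely generated targets: because $M$ is finitely generated, the image of any $g\colon M\to E$ into a free module is generated by finitely many elements, each supported on finitely many basis vectors, and hence lies in a finitely generated free direct summand $E_0$ of $E$; so it suffices to factor maps into a module $E\cong A^n$. Writing such a $g$ as a tuple $(g_1,\dots,g_n)$ with $g_i\in M^\ast$ and using the surjectivity of $\phi^\ast$ to lift each $g_i$ to some $\psi_i\in F^\ast$, the map $\psi=(\psi_1,\dots,\psi_n)\colon F\to E$ satisfies $\psi\circ\phi=g$, proving versality.

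For part (iii), I would dualize the surjection $\phi^\ast\colon F^\ast\to M^\ast$ of part (ii): applying the left-exact contravariant functor $\hom_A(-,A)$ turns a surjection into an injection, so $\phi^{\ast\ast}\colon M^{\ast\ast}\to F^{\ast\ast}$ is injective. Since $F$ is finitely generated and free it is reflexive, so the evaluation map $\delta_F\colon F\to F^{\ast\ast}$ is an isomorphism, and naturality of the double-dual map gives $\delta_F\circ\phi=\phi^{\ast\ast}\circ\delta_M$. Hence $\phi=\bigl(\delta_F^{-1}\circ\phi^{\ast\ast}\bigr)\circ\delta_M$ is the desired factorization $M\to M^{\ast\ast}\hookrightarrow F$, the second arrow being injective because $\phi^{\ast\ast}$ is and $\delta_F$ is an isomorphism. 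The computations are all short; the only genuinely delicate point is the reduction in the converse of part (ii), where one must check that testing versality against finitely generated free modules already suffices for arbitrary free targets — this is exactly where finite generation of $M$ enters — while the reflexivity of $F$ is the place where the hypotheses on $F$ are used essentially in part (iii).
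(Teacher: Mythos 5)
Your argument is correct. Note that the paper itself gives no proof of this proposition, deferring to \cite{reesbud} and \cite{jag1}; your write-up supplies the standard argument found there: (i) reduces to the ideal identity $L_\phi=\cap_g L_g$ via the factorization $\sym(g)=\sym(\psi)\circ\sym(\phi)$, (ii) uses that $A$ itself is free for the forward direction and a componentwise lifting for the converse, and (iii) dualizes the surjection of (ii) and uses reflexivity of $F$. The one genuinely delicate point, the reduction of an arbitrary free target $E$ to a finitely generated free direct summand containing $\im(g)$, is handled correctly and is exactly where finite generation of $M$ is needed.
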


\begin{proposition}\label{prop:existversal}
 Let $M$ be a finitely generated module over $A$. Then, there exists a versal map $M\to F$ for some finitely generated and free module~$F$.
\end{proposition}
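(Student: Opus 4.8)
The plan is to use the characterization of versal maps from Proposition~\ref{prop:versalgerrees}\eqref{item:vers2}, which says that $\phi\colon M\to F$ is versal precisely when the dual map $\phi^\ast\colon F^\ast\to M^\ast$ is surjective. Thus it suffices to produce a finitely generated free module $F$ together with a homomorphism $\phi\colon M\to F$ whose dual is surjective onto $M^\ast$.

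First I would observe that $M^\ast=\hom_A(M,A)$ is itself finitely generated. Indeed, since $A$ is noetherian and $M$ is finitely generated, $M$ is finitely presented, so a presentation $A^p\to A^q\to M\to 0$ yields, upon applying $\hom_A(-,A)$, an exact sequence $0\to M^\ast\to A^q$. Hence $M^\ast$ is a submodule of a finitely generated module over a noetherian ring, and is therefore finitely generated. This is the only step where the noetherian hypothesis is essential.

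Next, choosing generators $f_1,\dots,f_n$ of $M^\ast$, each $f_i\colon M\to A$, I would set $F=A^n$ and define $\phi\colon M\to F$ by $\phi(m)=(f_1(m),\dots,f_n(m))$. Writing $e_1^\ast,\dots,e_n^\ast$ for the dual basis of $F^\ast=(A^n)^\ast$, a direct computation gives $\phi^\ast(e_i^\ast)=e_i^\ast\circ\phi=f_i$. Since the $f_i$ generate $M^\ast$, the map $\phi^\ast$ is surjective, and so $\phi$ is versal by Proposition~\ref{prop:versalgerrees}\eqref{item:vers2}.

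I expect there to be no genuine obstacle here: once the finite generation of $M^\ast$ is in hand, the construction of $\phi$ directly from generators of the dual is routine, and the verification that $\phi^\ast$ is surjective is a one-line check on dual bases. The conceptual work has already been done in establishing the duality criterion of Proposition~\ref{prop:versalgerrees}\eqref{item:vers2}, which converts the existence of a versal map into the elementary existence of a surjection onto the finitely generated module $M^\ast$.
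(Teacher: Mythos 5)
Your proof is correct and follows essentially the same route as the paper, whose construction (given in the remark after the proposition) also reduces to Proposition~\ref{prop:versalgerrees}.\ref{item:vers2} by choosing a finitely generated free module surjecting onto $M^\ast$ and dualizing. Your explicit map $m\mapsto(f_1(m),\dots,f_n(m))$ is precisely the paper's composite $M\to M^{\ast\ast}\hookrightarrow(F')^\ast$ written out in coordinates, with the added (welcome) detail that $M^\ast$ is finitely generated because $A$ is noetherian.
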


\begin{remark}\label{ejversal}
Note the following.
\begin{enumerate}[(i)]
\item 
Given an ideal $I\subseteq A$, the inclusion $I\hookrightarrow A$ is \emph{not} always versal, see for instance \cite[Remark~1.6]{jag1}.
\item A homomorphism $\phi\colon M\to F$ that factors as $M\to M^{\ast\ast}\hookrightarrow F$ is not necessarily versal, see for instance \cite[Remark~1.13]{jag1}. 
\item The versal map of Proposition~\ref{prop:existversal} can be constructed as follows: choose a finitely generated and free module $F'$ that surjects onto the dual $M^\ast$. Then, the composition $M\to M^{\ast\ast}\hookrightarrow (F')^\ast$ is versal, as its dual is surjective by construction.
\end{enumerate}
\end{remark}

Two immediate consequences of the construction of the Rees algebra are the following. 
\begin{proposition}\label{prop:dsri}
Let $M$ and $N$ be finitely generated $A$-modules and let $f\colon M\to N$ be a homomorphism. 
\begin{enumerate}[(i)]
\item If $f$ is surjective, then $\fR(M)\to\fR(N)$ is surjective.
\item If $f^\ast\colon N^\ast\to M^\ast$ is surjective, then $\fR(M)\to\fR(N)$ is injective. 
\end{enumerate}
\end{proposition}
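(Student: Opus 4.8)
The plan is to deduce both statements from the intrinsic description of the Rees algebra given in Theorem~\ref{thm:intrinsicdef}. Write $c_M\colon\sym(M)\to\Gamma(M^\ast)^\vee$ for the canonical map, so that $\fR(M)=\im(c_M)$ and likewise $\fR(N)=\im(c_N)$. First I would record that $c$ is natural: a homomorphism $f\colon M\to N$ induces $\sym(f)\colon\sym(M)\to\sym(N)$ on one side and, via $f^\ast\colon N^\ast\to M^\ast$, then $\Gamma$, then the graded dual, a map $\Gamma(f^\ast)^\vee\colon\Gamma(M^\ast)^\vee\to\Gamma(N^\ast)^\vee$ on the other. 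These fit into a commutative square
\[
\begin{array}{ccc}
\sym(M) & \xrightarrow{\ c_M\ } & \Gamma(M^\ast)^\vee\\[2pt]
\downarrow & & \downarrow\\[2pt]
\sym(N) & \xrightarrow{\ c_N\ } & \Gamma(N^\ast)^\vee
\end{array}
\]
whose vertical maps are $\sym(f)$ and $\Gamma(f^\ast)^\vee$. Since $c_N\circ\sym(f)=\Gamma(f^\ast)^\vee\circ c_M$ carries $\im(c_M)$ into $\im(c_N)$, the induced map $\fR(M)\to\fR(N)$ is precisely the restriction of $\Gamma(f^\ast)^\vee$ to $\fR(M)$.

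For part (i) I would use that the symmetric algebra preserves surjections (Remark~\ref{rem:psur}): if $f$ is surjective then so is $\sym(f)$. Chasing the square, every element of $\fR(N)=c_N(\sym(N))=c_N\bigl(\sym(f)(\sym(M))\bigr)$ equals $\Gamma(f^\ast)^\vee(c_M(x))$ for some $x\in\sym(M)$, hence lies in the image of $\fR(M)$. Therefore $\fR(M)\to\fR(N)$ is surjective.

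For part (ii), by the square it suffices to prove that $\Gamma(f^\ast)^\vee$ is injective, for then so is its restriction to $\fR(M)$. The key point is the lemma that the divided power functor preserves surjections: if $g\colon W\to V$ is surjective, then $\Gamma(g)\colon\Gamma(W)\to\Gamma(V)$ is surjective. I would prove this using that $\Gamma(V)$ is generated as an $A$-algebra by the divided powers $v^{[n]}$ with $v\in V$; lifting each such $v$ along $g$ and using that $\Gamma(g)$ is the algebra map sending $v^{[n]}\mapsto g(v)^{[n]}$, every generating divided-power monomial of $\Gamma(V)$ is hit, so $\Gamma(g)$ is onto. Applying this to $g=f^\ast\colon N^\ast\to M^\ast$ (surjective by hypothesis) shows $\Gamma(f^\ast)\colon\Gamma(N^\ast)\to\Gamma(M^\ast)$ is surjective; taking the graded dual and using the left exactness of $\hom_A(-,A)$ in each degree, $\Gamma(f^\ast)^\vee$ is injective, as required.

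The formal diagram chases are routine; the two points carrying the actual content are the naturality of the canonical map $c$ (which I would either cite from \cite{jag1} or verify degreewise from the pairing between symmetric and divided powers) and the lemma that $\Gamma$ preserves surjections. I expect the latter to be the main obstacle to write carefully, since it is the only step that uses a genuine structural property of divided powers rather than pure formalism, resting on the generation of $\Gamma(V)$ by divided-power elements.
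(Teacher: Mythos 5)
Your proposal is correct and follows essentially the same route as the paper: both deduce the result from the naturality square for the canonical map $\sym(M)\to\Gamma(M^\ast)^\vee$ of Theorem~\ref{thm:intrinsicdef}, using that $\sym$ preserves surjections for (i) and that $\Gamma$ preserves surjections (so that the graded dual is degreewise injective) for (ii). The only difference is that you sketch a justification for $\Gamma$ preserving surjections, which the paper simply asserts.
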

\begin{proof}
The map $\fR(M)\to\fR(N)$ is induced by the following commutative diagram.
\[\xymatrix@R=1.1em{
\sym(M)\ar[rr]\ar[dd]\ar@{->>}[dr]&&\sym(N)\ar[dd]\ar@{->>}[dr]\\
&\fR(M)\ar@{-->}[rr]\ar@{_(->}[dl]&&\fR(N)\ar@{_(->}[dl]\\
\Gamma(M^\ast)^\vee\ar[rr]&&\Gamma(N^\ast)^\vee}\]
\begin{enumerate}
\item If $f$ is surjective, then $\sym(M)\to\sym(N)$ is surjective. The diagram then implies that $\fR(M)\to\fR(N)$ is surjective.
\item If $f^\ast \colon N^\ast\to M^\ast$ is surjective, then, as the functor $\Gamma$ preserves surjections, we get a surjection on the graded map $\Gamma(N^\ast)\to\Gamma(M^\ast)$. This graded map will be a surjection in every degree, and we get, by taking the graded dual, an injection $\Gamma(M^\ast)^\vee\to\Gamma(N^\ast)^\vee$. The diagram then induces an injection ${\fR(M)\to\fR(N)}$.\qedhere
\end{enumerate}
\end{proof}
\begin{remark}
The proof of Proposition~\ref{prop:dsri} was written using the results of \cite{jag1}, but it can also be shown by using versal maps.
\end{remark}

\section{The torsionless quotient}\label{sec:torsionless}\label{sec:2}
A module $M$ is called \emph{torsionless} if it can be embedded in some free module. This is equivalent to the canonical map $M\to M^{\ast\ast}$ being injective, see, e.g., \cite[Section~4H]{MR1653294}. 
\begin{definition}
For any module $M$ we call the module $M^{tl}:=\im(M\to M^{\ast\ast})$ the \emph{torsionless quotient} of $M$. 
\end{definition}
By definition the torsionless quotient of $M$ injects into the double dual $M^{\ast\ast}$, and if the canonical map $M\to M^{\ast\ast}$ is injective, then $M^{tl}=M$. That is, if $M$ is torsionless, then $M^{tl}=M$. 
Moreover, Proposition~\ref{prop:versalgerrees}~\textit{(\ref{item:vers3})} implies that the torsionless quotient of $M$ is equal to the image $\im(M\to F)$, for any versal map $M\to F$.

\begin{lemma}\label{lem:tordual}
Given a versal map $M\to F$, then the induced map $M^{tl}\to F$ is also versal. In fact, the dual of $M$ is equal to $(M^{tl})^\ast$.
\end{lemma}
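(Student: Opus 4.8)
The plan is to prove the two assertions of Lemma~\ref{lem:tordual} in the reverse order, first establishing that $(M^{tl})^\ast = M^\ast$ and then deducing versality of $M^{tl}\to F$ from the dual criterion of Proposition~\ref{prop:versalgerrees}.\ref{item:vers2}. Recall that by definition $M^{tl}=\im(M\to M^{\ast\ast})$, so we have a canonical surjection $\pi\colon M\twoheadrightarrow M^{tl}$ and an inclusion $M^{tl}\hookrightarrow M^{\ast\ast}$. Dualizing the surjection $\pi$ gives an injection $\pi^\ast\colon (M^{tl})^\ast\hookrightarrow M^\ast$, since $\hom_A(-,A)$ is left exact. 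The content of the first claim is that this injection is in fact an isomorphism.

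To see that $\pi^\ast$ is surjective, I would use the fact that the kernel $K=\ker(M\to M^{\ast\ast})$ consists precisely of those elements annihilated by every functional in $M^\ast$. Concretely, write the exact sequence $0\to K\to M\xrightarrow{\pi} M^{tl}\to 0$ and apply $\hom_A(-,A)$ to obtain the exact sequence $0\to (M^{tl})^\ast\xrightarrow{\pi^\ast} M^\ast\to K^\ast$. Thus surjectivity of $\pi^\ast$ is equivalent to the vanishing of the connecting behaviour, and it suffices to check that every functional $M\to A$ factors through $\pi$, i.e.\ kills $K$. But this is immediate: if $\lambda\in M^\ast$ and $x\in K$, then $x$ maps to zero in $M^{\ast\ast}$, which by definition means $\lambda(x)=0$ for all $\lambda$; hence every $\lambda$ vanishes on $K$ and factors uniquely through $M^{tl}$. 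Therefore $\pi^\ast$ is a bijection and $(M^{tl})^\ast=M^\ast$, proving the second sentence of the lemma.

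For the first sentence, suppose $M\to F$ is versal. By Proposition~\ref{prop:versalgerrees}.\ref{item:vers3} this factors canonically as $M\xrightarrow{\pi} M^{tl}\hookrightarrow F$, so the induced map $M^{tl}\to F$ is exactly the inclusion in that factorization. To prove it is versal, I apply the criterion \ref{item:vers2}: the map $M^{tl}\to F$ is versal if and only if its dual $F^\ast\to (M^{tl})^\ast$ is surjective. Now the dual of the whole composition $M\to M^{tl}\to F$ is $F^\ast\to (M^{tl})^\ast\xrightarrow{\pi^\ast} M^\ast$, and versality of $M\to F$ tells us (again by \ref{item:vers2}) that this composite $F^\ast\to M^\ast$ is surjective. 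Since $\pi^\ast$ is an isomorphism by the previous paragraph, it follows formally that $F^\ast\to (M^{tl})^\ast$ is surjective as well, and hence $M^{tl}\to F$ is versal.

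The argument is essentially formal once the identification $(M^{tl})^\ast=M^\ast$ is in hand, so the only real point requiring care is that surjectivity step. The main obstacle, such as it is, lies in verifying cleanly that every functional in $M^\ast$ annihilates the kernel $K$ of $M\to M^{\ast\ast}$; this is where the precise definition of the double-dual map must be invoked, and it is what makes the dual of the surjection $\pi$ an isomorphism rather than merely a monomorphism. Everything else is a diagram chase through the left exactness of $\hom_A(-,A)$ combined with Proposition~\ref{prop:versalgerrees}.
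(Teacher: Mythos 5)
Your proof is correct, and it reaches the same two conclusions via the same overall strategy (reduce everything to the surjectivity criterion of Proposition~\ref{prop:versalgerrees}.\ref{item:vers2}), but the key middle step is handled differently. The paper establishes that $(M^{tl})^\ast\to M^\ast$ is surjective by \emph{using the versal map}: since $F^\ast\to M^\ast$ factors through $(M^{tl})^\ast$ and is surjective, the second map of the composite must be surjective, and injectivity comes for free from $M\twoheadrightarrow M^{tl}$. You instead prove the identification $(M^{tl})^\ast=M^\ast$ intrinsically, by observing that every functional $\lambda\in M^\ast$ annihilates $K=\ker(M\to M^{\ast\ast})$ (since $x\in K$ means precisely that $\lambda(x)=0$ for all $\lambda$) and therefore factors through $M^{tl}$; left exactness of $\hom_A(-,A)$ then gives the isomorphism. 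Your argument is slightly stronger in spirit: it shows the second assertion of the lemma for \emph{any} finitely generated module, with no versal map in sight, whereas the paper's proof entangles the two claims. The paper's version buys brevity (two dualized triangles and a surjective-composite argument), while yours isolates the module-theoretic content of the ``in fact'' clause. One cosmetic nitpick: Proposition~\ref{prop:versalgerrees}.\ref{item:vers3} literally gives the factorization $M\to M^{\ast\ast}\hookrightarrow F$; the factorization $M\twoheadrightarrow M^{tl}\hookrightarrow F$ that you invoke is the immediate consequence recorded in the paper just after the definition of $M^{tl}$, namely that $M^{tl}=\im(M\to F)$ for a versal map. This does not affect the validity of your argument.
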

\begin{proof}
By the motivation above, the diagram 
\[\xymatrix@R=1.81em{
M\ar[rr]\ar@{->>}[dr]&& F\\
&M^{tl}\ar@{^{(}->}[ur]
}\]
is commutative. Since $M\to F$ is versal it follows by Proposition~\ref{prop:versalgerrees}~\textit{(\ref{item:vers2})} that the upper arrow in the dual diagram 
\[\xymatrix@R=1.81em{
M^\ast&& F^\ast\ar[dl]\ar@{->>}[ll]\\
&(M^{tl})^\ast\ar@{_{(}->}[ul]
}\]
is surjective. Thus, $(M^{tl})^\ast\to M^\ast$ is surjective. Since $M\to M^{tl}$ is surjective by definition, implying that $(M^{tl})^\ast\to M^\ast$ is also injective, we conclude that $(M^{tl})^\ast\to M^\ast$ is an isomorphism. As the dual $F^\ast\to M^\ast$ is surjective and $M^\ast=(M^{tl})^\ast$, we get that~${F^\ast\to(M^{tl})^\ast}$ is surjective. By Proposition~\ref{prop:versalgerrees}~\textit{(\ref{item:vers2})} it follows that $M^{tl}\to F$ is versal.
\end{proof}
This result shows that the torsionless quotient of $M$ is, as the name suggests, torsionless. Indeed, we get that the canonical map $M^{tl}\to(M^{tl})^{\ast\ast}=M^{\ast\ast}$ is injective. 
We now show that the torsionless quotient is related to the Rees algebra. 

\begin{lemma}\label{lem:1123}
Let $M$ be a finitely generated $A$-module. Then, the degree $1$ part of the graded $A$-algebra $\fR(M)$ is $M^{tl}$.
\end{lemma}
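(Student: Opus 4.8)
The plan is to read off the degree~$1$ component directly from the intrinsic description of the Rees algebra in Theorem~\ref{thm:intrinsicdef}. There, $\fR(M)$ is realized as the image of the canonical map $\sym(M)\to\Gamma(M^\ast)^\vee$. Both source and target are graded $A$-algebras and this is a map of graded algebras, so its image is a graded subalgebra whose degree~$n$ piece is the image of the degree~$n$ component of the map. Hence it suffices to identify the degree~$1$ component of $\sym(M)\to\Gamma(M^\ast)^\vee$.

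First I would compute the two degree~$1$ pieces. On the source, $\sym^1(M)=M$. On the target, the graded dual in degree~$1$ is $\bigl(\Gamma^1(M^\ast)\bigr)^\ast$, and since $\Gamma^1(M^\ast)=M^\ast$ this equals $(M^\ast)^\ast=M^{\ast\ast}$. Thus the degree~$1$ component of the canonical map is a homomorphism $M\to M^{\ast\ast}$.

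The main point is then to check that this homomorphism is exactly the canonical evaluation map into the double dual. This follows from the way the map of Theorem~\ref{thm:intrinsicdef} is built: in each degree it is adjoint to the natural pairing $\sym^n(M)\otimes_A\Gamma^n(M^\ast)\to A$, and in degree~$1$ this pairing is simply the evaluation pairing $M\otimes_A M^\ast\to A$, whose adjoint is precisely the canonical map $M\to M^{\ast\ast}$. I expect this identification of the degree~$1$ pairing with plain evaluation to be the only step requiring care, since it depends on unwinding the construction in \cite{jag1}; everything else is formal.

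Combining these observations, the degree~$1$ part of $\fR(M)$ is $\im\bigl(M\to M^{\ast\ast}\bigr)$, which is by definition the torsionless quotient $M^{tl}$, as claimed.
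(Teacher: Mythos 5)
Your proof is correct, but it takes a different route from the paper. The paper works with a versal map $\phi\colon M\to F$: by Proposition~\ref{prop:versalgerrees} one has $\fR(M)=\im\bigl(\sym(M)\to\sym(F)\bigr)$, the degree~$1$ part of that image is $\im(M\to F)$, and the factorization $M\to M^{\ast\ast}\hookrightarrow F$ of a versal map identifies this with $M^{tl}$. You instead stay with the intrinsic description $\fR(M)=\im\bigl(\sym(M)\to\Gamma(M^\ast)^\vee\bigr)$ of Theorem~\ref{thm:intrinsicdef} and read off the degree~$1$ component of the canonical map as the evaluation $M\to M^{\ast\ast}$. The identification you flag as the delicate step is indeed the crux, but it is correct: the map of Theorem~\ref{thm:gammadualissym} is the identity on $M^\ast$ in degree~$1$, so the composite $\sym(M)\to\sym(M^{\ast\ast})\to\Gamma(M^\ast)^\vee$ is the canonical map $M\to(M^\ast)^\ast$ in degree~$1$, and the rest is the formal fact that the image of a graded map is computed degreewise. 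Your argument is choice-free (no versal map needs to be selected) at the cost of unwinding the pairing from \cite{jag1}; the paper's argument avoids that unwinding but leans on the existence of versal maps and their canonical factorization through the double dual.
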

\begin{proof}
The Rees algebra can be computed as the image of the graded ${\text{$A$-algebra}}$ homomorphism $\sym(M)\to\Gamma(M^\ast)^\vee$. 
Thus, the degree $1$ part of~$\fR(M)$ is equal to the degree one part of $\im\bigl(\sym(M)\to\Gamma(M^\ast)^\vee)$, which is 
\[\im\bigl(\sym^1(M)\to\Gamma^1(M^\ast)^\vee\bigr)=\im(M\to M^{\ast\ast})=M^{tl}.\qedhere\]
\end{proof}

\begin{lemma}\label{lem:reqtl}
For every finitely generated $A$-module $M$ we have an equality ${\fR(M)=\fR(M^{tl})}$. 
\end{lemma}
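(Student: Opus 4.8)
The plan is to show that the natural map $\fR(M)\to\fR(M^{tl})$ induced by the canonical surjection $\pi\colon M\twoheadrightarrow M^{tl}$ is simultaneously surjective and injective, and therefore an isomorphism of graded $A$-algebras. The two directions are each handled by one part of Proposition~\ref{prop:dsri}.

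First I would treat surjectivity. Since $\pi$ is by construction a surjection, Proposition~\ref{prop:dsri}(i) applies verbatim and gives that $\fR(M)\to\fR(M^{tl})$ is surjective. For injectivity I would apply Proposition~\ref{prop:dsri}(ii) to the homomorphism $\pi$, taking $N=M^{tl}$; this requires that the dual map $\pi^\ast\colon(M^{tl})^\ast\to M^\ast$ be surjective. Here I invoke Lemma~\ref{lem:tordual}: although that lemma is stated relative to a chosen versal map, its conclusion $(M^{tl})^\ast=M^\ast$ is intrinsic to $M$, since every finitely generated module over the noetherian ring $A$ admits a versal map. Because $\pi$ is surjective, $\pi^\ast$ is automatically injective, and Lemma~\ref{lem:tordual} identifies it as an isomorphism; in particular $\pi^\ast$ is surjective, so Proposition~\ref{prop:dsri}(ii) yields injectivity of $\fR(M)\to\fR(M^{tl})$.

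The only point requiring care is that the isomorphism supplied by Lemma~\ref{lem:tordual} be exactly the map $\pi^\ast$ dual to $\pi$, rather than merely an abstract isomorphism, so that the hypothesis of Proposition~\ref{prop:dsri}(ii) is genuinely met; this is what the surjectivity of $\pi$ guarantees. As an alternative route that sidesteps this bookkeeping entirely, I could argue directly with versal maps: choose a versal map $M\to F$ with its canonical factorization $M\twoheadrightarrow M^{tl}\hookrightarrow F$ from Proposition~\ref{prop:versalgerrees}(iii). By Lemma~\ref{lem:tordual} the induced map $M^{tl}\to F$ is again versal, so Proposition~\ref{prop:versalgerrees}(i) gives $\fR(M)=\im\bigl(\sym(M)\to\sym(F)\bigr)$ and $\fR(M^{tl})=\im\bigl(\sym(M^{tl})\to\sym(F)\bigr)$. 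Since $\sym(M)\twoheadrightarrow\sym(M^{tl})$ (the symmetric algebra preserves surjections, as noted in Remark~\ref{rem:psur}), these two images inside $\sym(F)$ coincide, giving $\fR(M)=\fR(M^{tl})$ directly. I expect essentially no obstacle beyond this identification step, as the substance is already carried by the earlier results.
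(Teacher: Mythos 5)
Your proposal is correct, and your primary route is genuinely different from the paper's. The paper argues entirely inside $\sym(F)$ for a chosen versal map $M\to F$: it factors the versal map as $M\twoheadrightarrow M^{tl}\hookrightarrow F$, invokes Lemma~\ref{lem:tordual} to see that $M^{tl}\hookrightarrow F$ is again versal, and then observes that since $\sym(M)\twoheadrightarrow\sym(M^{tl})$ the two images $\im\bigl(\sym(M)\to\sym(F)\bigr)$ and $\im\bigl(\sym(M^{tl})\to\sym(F)\bigr)$ coincide --- this is exactly your ``alternative route,'' so that part matches the paper verbatim. Your main argument instead applies Proposition~\ref{prop:dsri} twice to the single map $\pi\colon M\twoheadrightarrow M^{tl}$: part (i) gives surjectivity of $\fR(M)\to\fR(M^{tl})$ from surjectivity of $\pi$, and part (ii) gives injectivity once you know $\pi^\ast\colon(M^{tl})^\ast\to M^\ast$ is surjective, which the proof of Lemma~\ref{lem:tordual} supplies (it shows precisely that $\pi^\ast$ is injective because $\pi$ is surjective, and surjective because $F^\ast\to M^\ast$ factors through it). Your caution about checking that the isomorphism of Lemma~\ref{lem:tordual} really is $\pi^\ast$ is well placed and is settled by inspecting that lemma's proof. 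What your route buys is that it never needs to fix a versal $F$ in the final argument and exhibits the isomorphism as the canonical map induced by $\pi$; what the paper's route buys is a single explicit identification of both Rees algebras as the same subalgebra of $\sym(F)$, with no appeal to the injectivity criterion of Proposition~\ref{prop:dsri}. There is no circularity in either direction, since both Proposition~\ref{prop:dsri} and Lemma~\ref{lem:tordual} precede this lemma.
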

\begin{proof}
Let $M\to F$ be a versal map. This factorizes as ${M\twoheadrightarrow M^{tl}\hookrightarrow F}$.
 Since also $M^{tl}\hookrightarrow F$ is versal we get a commutative diagram
\[\xymatrix{
&&\fR(M^{tl})\ar@{_(->}[d]\\
\sym(M)\ar@{->>}[r]\ar@/_1pc/@{->>}[drr]&\sym(M^{tl})\ar@{->>}[ur]\ar[r]\ar@{-->>}[dr]^\theta&\sym(F)\\
&&\fR(M)\ar@{^(->}[u]}\]
where the surjection $\theta\colon\sym(M^{tl})\twoheadrightarrow \fR(M)$ is canonically induced by the commutativity of the lower triangle. Thus, \[\fR(M^{tl})=\im\bigl(\sym(M^{tl})\to\sym(F)\bigr)=\fR(M).\qedhere\]
\end{proof}

We noted in Remark~\ref{rem:psur} that the Rees algebra preserves surjections, but an immediate consequence of the previous results is the following stronger statement.
\begin{proposition}\label{prop:tlsiffrs}
Let $M\to N$ be a homomorphism of finitely generated $A$-modules. Then, the induced map $M^{tl}\to N^{tl}$ is surjective if and only if $\fR(M)\to~\fR(N)$ is surjective. 
\end{proposition}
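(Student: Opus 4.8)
The plan is to prove the two implications separately, using that the Rees algebra is a graded $A$-algebra together with the two lemmas identifying its degree one part and its invariance under passing to the torsionless quotient.

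For the reverse direction I would argue as follows. The map $\fR(M)\to\fR(N)$ is a homomorphism of graded $A$-algebras, so it respects the grading and is surjective if and only if it is surjective in each degree; in particular, surjectivity implies surjectivity in degree one. By Lemma~\ref{lem:1123} the degree one parts of $\fR(M)$ and $\fR(N)$ are $M^{tl}$ and $N^{tl}$ respectively, and the degree one component of the induced map is precisely the map $M^{tl}\to N^{tl}$. Hence surjectivity of $\fR(M)\to\fR(N)$ forces $M^{tl}\to N^{tl}$ to be surjective.

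For the forward direction I would use the naturality of the torsionless quotient. Since $M\mapsto\im(M\to M^{\ast\ast})$ is functorial, the homomorphism $M\to N$ fits into a commutative square with the canonical surjections $M\twoheadrightarrow M^{tl}$ and $N\twoheadrightarrow N^{tl}$ and the induced map $M^{tl}\to N^{tl}$ along the bottom. Applying $\fR$ and invoking Lemma~\ref{lem:reqtl}, which gives $\fR(M)=\fR(M^{tl})$ and $\fR(N)=\fR(N^{tl})$ compatibly with these surjections, identifies $\fR(M)\to\fR(N)$ with $\fR(M^{tl})\to\fR(N^{tl})$. Now if $M^{tl}\to N^{tl}$ is surjective, then by Proposition~\ref{prop:dsri}(i), equivalently Remark~\ref{rem:psur}, the map $\fR(M^{tl})\to\fR(N^{tl})$ is surjective, and therefore so is $\fR(M)\to\fR(N)$.

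The main point requiring care is the compatibility in the forward direction: one must check that the identifications $\fR(M)=\fR(M^{tl})$ coming from Lemma~\ref{lem:reqtl} are natural enough that the induced square of Rees algebras commutes, so that $\fR(M)\to\fR(N)$ really corresponds to the map induced by $M^{tl}\to N^{tl}$. This reduces to the naturality of the factorization $M\to M^{\ast\ast}$ through $M^{tl}$ together with the functoriality of $\sym$ and of taking images, all of which are straightforward once the square is written out explicitly.
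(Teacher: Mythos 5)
Your proof is correct and follows essentially the same route as the paper's: the forward direction via Lemma~\ref{lem:reqtl} and the fact that the Rees algebra preserves surjections, and the reverse direction by restricting to degree one and applying Lemma~\ref{lem:1123}. Your additional remark about checking that the identifications $\fR(M)=\fR(M^{tl})$ are compatible with the induced maps is a point the paper leaves implicit, and your sketch of why it holds is adequate.
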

\begin{proof}
Suppose first that $M^{tl}\to N^{tl}$ is surjective. Then, $\fR(M^{tl})\to\fR(N^{tl})$ is surjective since the Rees algebra preserves surjections. From Lemma~\ref{lem:reqtl} we have that $\fR(M)=\fR(M^{tl})$ and $\fR(N)=\fR(N^{tl})$, so $\fR(M)\to\fR(N)$ is surjective.

Conversely, suppose that $\fR(M)\to\fR(N)$ is surjective. In particular, this map will be surjective in degree $1$, and by Lemma~\ref{lem:1123} this implies that ${M^{tl}\to N^{tl}}$ is surjective.
\end{proof}

\section{Versal maps and coherent functors}\label{sec:verfunc}\label{sec:3}
So far, we have been working in the abelian category $\moda$ of finitely generated $A$-modules over a noetherian ring $A$. Another abelian category of interest is the category $\funa$ of additive covariant functors $\fF\colon\moda\to\moda$, where kernels, cokernels and images are all calculated pointwise. In $\moda$ the notions of monomorphisms and epimorphisms are equivalent to the notions of injections and surjections. A monomorphism of functors is a morphism that is injective at every point. We therefore call a monomorphism of functors an injection. Similarly, epimorphisms of functors are pointwise surjections, and we call an epimorphism of functors a surjection.
\begin{example}
For any finitely generated module $M$, there is an additive covariant functor $h^M\colon\moda\to\moda$ defined by \scalebox{0.99}[1.0]{$N\mapsto h^M(N)=\hom_A(M,N)$}. Another example is the additive covariant functor $t_M\colon\moda\to\moda$ defined by $N\mapsto t_M(N)=M\otimes_AN$.
\end{example}
The Yoneda embedding gives a contravariant left-exact embedding of categories \[\moda\subset\funa,\] sending a finitely generated module $M$ to the additive functor $h^M$. An immediate consequence of Yoneda's lemma is that the functors $h^M$ are projective objects in $\funa$. A functor $\fF\in\funa$ is called \emph{coherent} if it has a projective resolution of the form \[h^M\to h^N\to\fF\to0,\] where $M,N\in\moda$, and $0$ denotes the zero functor. The category $\mathcal{C}$ of coherent functors  is a full subcategory of $\funa$ and has many interesting properties, 
see \cite{auslander} and \cite{coherenthart}.
\begin{ex2}
Let $M$ be a finitely generated $A$-module.
\begin{enumerate}
\item The functor $h^M$ is coherent. Indeed, it has the trivial presentation \[0=h^0\to h^M\to h^M\to0.\]
\item The functor $t_M$ is coherent. This follows since $M$ admits a projective resolution $P_1\to P_2\to M\to0$, where $P_1$ and $P_2$ are finitely generated projective modules. Since tensoring is right-exact we get an exact sequence $t_{P_1}\to t_{P_2}\to t_M\to 0$. For finitely generated projective modules $P$ it holds that $t_P=h^{P^\ast}$, giving a projective resolution $h^{P_1^\ast}\to h^{P_2^\ast}\to t_M\to0$. \exendhere
\end{enumerate}
\end{ex2}

This example shows that the Yoneda embedding $\moda\subset\funa$ actually takes values in~$\mathcal{C}$. That is, the functor $M\mapsto h^M$ is an embedding of $\moda$ into the category $\mathcal{C}$ of coherent functors. 
\begin{theorem}[{\cite[Theorem~1.1a]{coherenthart}}]\label{thm:puuh}
If $f\colon\fF_1\to\fF_2$ is a morphism of coherent functors, then $\ker(f)$, $\coker(f)$, and $\im(f)$ are also coherent.
\end{theorem}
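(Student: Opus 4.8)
The plan is to work inside the ambient abelian category $\funa$, where kernels, cokernels and images are computed pointwise, and to exploit three consequences of Yoneda's lemma: the representables $h^M$ are projective; a morphism $h^M\to h^N$ is the same datum as an $A$-module homomorphism $u\colon N\to M$; and, crucially, the kernel of a map of representables is again representable. For this last point, if $u\colon N\to M$ induces $u^\ast\colon h^M\to h^N$, then a homomorphism $\phi\colon M\to T$ lies in the kernel at $T$ exactly when $\phi\circ u=0$, i.e.\ when $\phi$ factors through $\coker u$, so that $\ker(u^\ast)=h^{\coker u}$. Granting these, the cokernel is immediate, the image reduces to the kernel, and the only real work is the kernel.

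First I would dispatch the cokernel. Write $h^{M_i}\xrightarrow{\alpha_i}h^{N_i}\xrightarrow{\pi_i}\fF_i\to0$ for presentations of $\fF_1$ and $\fF_2$. Since $h^{N_1}$ is projective, the composite $h^{N_1}\xrightarrow{\pi_1}\fF_1\xrightarrow{f}\fF_2$ lifts through $\pi_2$ to a map $g\colon h^{N_1}\to h^{N_2}$ with $\pi_2 g=f\pi_1$. Because $\pi_1$ is surjective, $\im f=\im(\pi_2 g)$, and a pointwise check gives $\coker f=\coker\bigl(h^{N_1}\oplus h^{M_2}\xrightarrow{(g,\alpha_2)}h^{N_2}\bigr)$. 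As $h^{N_1}\oplus h^{M_2}=h^{N_1\oplus M_2}$, this is a presentation of the required shape, so $\coker f$ is coherent.

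The heart of the argument is that $\ker f$ is coherent, and here I would proceed in two stages, each time turning a kernel into a representable functor via the lemma above. Set $h^L:=\ker\bigl(h^{N_1}\oplus h^{M_2}\xrightarrow{(g,-\alpha_2)}h^{N_2}\bigr)$, which is representable. A pointwise computation --- using that $\ker\pi_2=\im\alpha_2$ and that $\pi_1$ is surjective --- identifies $\ker f$ with the image of the morphism $\chi\colon h^L\to\fF_1$ obtained by projecting to $h^{N_1}$ and applying $\pi_1$. This exhibits a surjection $h^L\twoheadrightarrow\ker f$; it remains to present the relations, i.e.\ to realize $\ker\chi$ as the image of a representable. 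For this I would lift $\chi$ through $\pi_1$ to $\tilde\chi\colon h^L\to h^{N_1}$ and set $h^{L'}:=\ker\bigl(h^L\oplus h^{M_1}\xrightarrow{(\tilde\chi,-\alpha_1)}h^{N_1}\bigr)$, which is representable precisely because $h^L$ already is. A second pointwise check shows the projection $h^{L'}\to h^L$ has image exactly $\ker\chi$, so that
\[
h^{L'}\longrightarrow h^L\xrightarrow{\ \chi\ }\ker f\longrightarrow 0
\]
is a presentation of $\ker f$ by representables, whence $\ker f$ is coherent.

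Finally, the image comes for free: $\im f=\ker\bigl(\fF_2\to\coker f\bigr)$, and since $\coker f$ is coherent, $\im f$ is coherent by the kernel result. The main obstacle, as the above makes clear, is confined entirely to the kernel: one must manufacture an honest finite presentation of $\ker f$ rather than merely describe it pointwise. The device that makes this possible is the single observation that $\ker(u^\ast)=h^{\coker u}$; applied twice --- once to produce a representable cover $h^L$ of $\ker f$ and once to produce its relations $h^{L'}$ --- it sidesteps any appeal to $\operatorname{Ext}$ and any circular use of the statement being proved. The two remaining pointwise identifications are then routine, since kernels, cokernels and images in $\funa$ are computed pointwise.
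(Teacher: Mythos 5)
Your proof is correct. Note first that the paper does not prove this statement at all: it is quoted verbatim from Hartshorne's \emph{Coherent functors} (Theorem~1.1a), where it is in turn attributed to Auslander, so there is no in-paper argument to compare against. Your argument is the standard one for showing that finitely presented functors form an abelian subcategory, and every step checks out: the key lemma $\ker(u^\ast)=h^{\coker u}$ is exactly the left-exactness of the contravariant Yoneda embedding applied to $N\to M\to\coker u\to0$ (and $\coker u$ is finitely generated since $M$ is, so the representing object stays in $\moda$); the identification $\coker f=\coker\bigl(h^{N_1\oplus M_2}\to h^{N_2}\bigr)$ is a correct pointwise computation; and the two-stage construction of $h^{L'}\to h^{L}\to\ker f\to0$ works because $L=\coker\bigl(N_2\to N_1\oplus M_2\bigr)$ and $L'=\coker\bigl(N_1\to L\oplus M_1\bigr)$ are again finitely generated. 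One cosmetic remark: you do not actually need projectivity to ``lift $\chi$ through $\pi_1$'' --- by construction $\chi$ is the composite $h^L\hookrightarrow h^{N_1\oplus M_2}\to h^{N_1}\xrightarrow{\pi_1}\fF_1$, so the lift $\tilde\chi$ is already in hand. Reducing $\im(f)$ to $\ker\bigl(\fF_2\to\coker f\bigr)$ at the end is also correct and avoids any circularity.
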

One advantage that the category of coherent functors has over the category of finitely generated modules is that it has an {exact} and reflexive dual.

\begin{proposition}[{\cite[Proposition~4.1]{coherenthart}}]\label{prop:dualityoffun}
Let $\mathcal{C}$ denote the category of coherent functors. There is a unique functor $\vee\colon\mathcal{C}\to\mathcal{C}$ which is exact, contravariant, and has the property that $\vee(h^M)=t_M$ for every finitely generated module $M$. Furthermore, $\vee\vee\cong\id_\mathcal{C}$.
\end{proposition}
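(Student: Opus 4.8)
The plan is to let the required properties dictate the construction, which also yields uniqueness for free. If $\vee$ is contravariant and exact with $\vee(h^M)=t_M$ naturally in $M$, then applying it to a projective presentation $h^{M_0}\xrightarrow{h^\psi}h^{M_1}\to\fF\to0$---where, by Yoneda's lemma, the first map is $h^\psi$ for a unique homomorphism $\psi\colon M_1\to M_0$---must convert this right-exact sequence into the left-exact sequence $0\to\vee(\fF)\to t_{M_1}\xrightarrow{t_\psi}t_{M_0}$. Hence $\vee$ is forced to be
\[\vee(\fF)=\ker\bigl(t_\psi\colon t_{M_1}\to t_{M_0}\bigr),\]
which is coherent by Theorem~\ref{thm:puuh}, and any two functors with the stated properties must agree. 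The identity $\vee(h^M)=t_M$ then falls out of the presentation $h^0\to h^M\xrightarrow{\id}h^M\to0$, for which $\psi\colon M\to0$ and the kernel is all of $t_M$.

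The first genuine task is to show this assignment is independent of the chosen presentation and functorial, and I expect this to be the technical heart. My approach is to regard a presentation as a two-term complex of projectives $[h^{M_0}\to h^{M_1}]$ with cokernel $\fF$, exploiting that the functors $h^M$ are projective in $\funa$. Given two presentations of $\fF$, lifting the identity produces a comparison chain map, unique up to chain homotopy by the usual argument (lift $h^{M_1}\to\fF$ through a surjection $h^{N_1}\twoheadrightarrow\fF$, then lift the induced map on images back through $h^{N_0}$). Passing through Yoneda, this comparison data induces a chain map of the $t$-dual complexes $[t_{M_1}\to t_{M_0}]$, and chain homotopies are preserved because $t$ is additive; homotopy-invariance of homology then gives a canonical isomorphism of the kernels. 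The same mechanism makes $\vee$ a contravariant functor.

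For exactness, given a short exact sequence $0\to\fF'\to\fF\to\fF''\to0$ of coherent functors, I would build compatible presentations by the horseshoe lemma, so that $M_i=M_i'\oplus M_i''$ and $\psi$ is block triangular. Applying $t$ yields split short exact sequences of the two-term complexes, and the snake lemma produces $0\to\vee(\fF'')\to\vee(\fF)\to\vee(\fF')\to\cdots$; the triangular shape of $\psi$ forces the connecting homomorphism to vanish, since an element of $\ker(t_{\psi'})$ lifts through the splitting to $t_{M_1'}\oplus 0$ and is sent to $0$ by $t_\psi$. This gives the required short exact sequence. It then remains to evaluate $\vee$ on the tensor functors: taking a finite free presentation $G\xrightarrow{d}F\to M\to0$ and using $t_P=h^{P^\ast}$ for free $P$, a direct computation gives $\vee(t_M)=\ker\bigl(h^F\xrightarrow{h^d}h^G\bigr)=h^M$ by left-exactness of $\hom$, so that $\vee$ interchanges $h^M$ and $t_M$.

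Finally, for $\vee\vee\cong\id_\mathcal{C}$, the preceding computations supply natural isomorphisms $\vee\vee(h^M)=\vee(t_M)\cong h^M$. I would assemble these into a natural transformation $\eta\colon\id_\mathcal{C}\to\vee\vee$ by presenting an arbitrary coherent $\fF$ as $h^{M_0}\to h^{M_1}\to\fF\to0$, applying the exact functor $\vee\vee$, and using the cokernel property together with naturality on the $h^{M_i}$ to induce $\eta_\fF$; the five lemma, comparing the two right-exact rows, shows each $\eta_\fF$ is an isomorphism. I expect the recurring obstacle to be the bookkeeping of naturality: every isomorphism above (independence of presentation, $\vee(t_M)\cong h^M$, and the comparison $\eta$) must be natural, and all of these ultimately reduce to the chain-homotopy uniqueness of liftings between projective presentations established at the outset.
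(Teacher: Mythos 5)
The paper does not prove this proposition---it is quoted verbatim from Hartshorne---so your argument stands on its own. Most of it is sound in outline: the forced formula $\vee(\fF)=\ker(t_{M_1}\to t_{M_0})$, the uniqueness, the computation $\vee(t_M)=h^M$, and the deduction of $\vee\vee\cong\id$ from exactness. One small imprecision in the well-definedness step: a two-term presentation is not a full resolution, so the comparison theorem does not give you a complete chain homotopy between two lifts; it only gives the degree-zero component $s\colon h^{M_0}\to h^{N_1}$ with $f_0-g_0=h^{\psi'}\circ s$. Fortunately that is exactly the component needed to show the two induced maps agree on the kernels, so the conclusion survives, but ``homotopy-invariance of homology'' is not quite the right invocation.

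The genuine gap is in the exactness step, which is where the real content of the theorem lives. Write the presentations as $h^{B}\xrightarrow{h^{\psi}}h^{C}\to\fF\to 0$ with $\psi\colon C\to B$. The horseshoe lemma gives $\psi=\left(\begin{smallmatrix}\psi'&0\\ \tilde\epsilon&\psi''\end{smallmatrix}\right)\colon C'\oplus C''\to B'\oplus B''$, where the cross term $\tilde\epsilon\colon C'\to B''$ is the Yoneda transpose of the horseshoe's lift $h^{B''}\to h^{C'}$ and is nonzero in general. After dualizing, the quotient complex is the one computing $\vee(\fF')$; an element $x\in\ker(t_{\psi'})$ lifts to $(x,0)$, and $t_\psi(x,0)=(0,t_{\tilde\epsilon}(x))$, which is \emph{not} zero. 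The connecting homomorphism is $x\mapsto[t_{\tilde\epsilon}(x)]\in\coker(t_{\psi''})$, and its vanishing is precisely the surjectivity of $\vee(\fF)\to\vee(\fF')$; it does not follow from the triangular shape of $\psi$, so your one-line dismissal begs the question. The step can be repaired, but only by actually using the injectivity of $\fF'\to\fF$: that injectivity, tested at $X=\coker(\psi'')$ with $f$ the quotient map $B''\to\coker(\psi'')$, produces $g\colon B'\to\coker(\psi'')$ with $g\circ\psi'=f\circ\tilde\epsilon$; then for $x\in\ker(\psi'\otimes 1_Y)$ one gets $(\tilde\epsilon\otimes 1)(x)\in\ker\bigl(B''\otimes Y\to\coker(\psi'')\otimes Y\bigr)=\im(\psi''\otimes 1_Y)$ by right-exactness of the tensor product, killing the connecting map. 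Without an argument of this kind, the proof establishes only left-exactness of $\vee$.
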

\begin{remark}
In the sequel we write $\fF^\vee:=\vee(\fF)$ for any coherent functor $\fF$. Note also that we use the same symbol for the dual of a coherent functor as we do for the graded dual of a graded $A$-algebra. This is to better emphasize the analogies that we will see further on.
\end{remark} 
Analogously to the Yoneda embedding we can consider the functor $t_{-}\colon\moda\to\mathcal{C}$, defined by $M\mapsto t_M$, which gives a covariant right-exact embedding $\moda\subset\mathcal{C}$. There is also a functor $\operatorname{ev}_A\colon\funa\to\moda$ defined by evaluating functors at $A$, i.e., $\fF\mapsto\fF(A)$. Note that ${M\mapsto t_M\mapsto t_M(A)=M}$ is the identity, so evaluating the functor at $A$ is a section of the embedding $M\mapsto t_M$, and in the spirit of algebraic geometry we call this taking global sections.

Let now $\phi\colon M\to F$ be a versal map. This gives a morphism $t_{M}\to t_{F}$ of coherent functors.
We saw in Proposition~\ref{prop:versalgerrees}~\textit{(\ref{item:vers3})} that such a versal map factors as $M\to M^{\ast\ast}\hookrightarrow F$, and embedding this composition into the category of coherent functors gives a commutative diagram
\[\xymatrix@R=1.7em{t_{M^{\ast\ast}}\ar[dr]\\
t_M\ar[r]\ar[u]&t_F}\]
where $t_{M^{\ast\ast}}\to t_F$ is \emph{not} injective in general, since tensoring is only right-exact. It turns out that there is another functor, not $t_{M^{\ast\ast}}$, through which the map $t_M\to t_F$ has a canonical factorization, such that the second map is injective. We will show that this functor is~$h^{M^{\ast}}$, resulting in the factorization \eqref{eq:hinj}.

\begin{proposition}[{\cite[Proposition~3.1]{coherenthart}}]\label{prop:naturalmap}
Let $\fF$ be a (not necessarily coherent) functor. Then there is a natural map $\alpha\colon \fF(A)\otimes_A(-)\to \fF$. 
Furthermore, $\fF$ is right-exact if and only if $\alpha$ is an isomorphism.
\end{proposition}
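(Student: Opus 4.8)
The plan is to prove this by an Eilenberg--Watts style argument: construct $\alpha$ explicitly, dispose of the easy implication, and for the converse compare $\fF$ with the tensor functor along a finite presentation of $N$. To build the natural transformation, for a finitely generated module $N$ and an element $n\in N$ write $\lambda_n\colon A\to N$ for the homomorphism $a\mapsto an$, and define $\alpha_N\colon\fF(A)\otimes_A N\to\fF(N)$ as the map induced by $(x,n)\mapsto\fF(\lambda_n)(x)$. The content is the verification of $A$-bilinearity. Additivity in $x$ and in $n$ follows from $\fF$ being additive, using $\lambda_{n+n'}=\lambda_n+\lambda_{n'}$ together with the fact that each $\fF(\lambda_n)$ is itself $A$-linear. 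Compatibility with the $A$-action rests on the identity $\lambda_{an}=\lambda_n\circ(a\cdot\id_A)$ and on $\fF(a\cdot\id_A)=a\cdot\id_{\fF(A)}$, which together give $\alpha_N(x\otimes an)=a\,\fF(\lambda_n)(x)=\alpha_N(ax\otimes n)$. Naturality in $N$ is then immediate from $g\circ\lambda_n=\lambda_{g(n)}$ for any $g\colon N\to N'$, pushed through $\fF$.

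The easy implication is quick: if $\alpha$ is an isomorphism then $\fF\cong\fF(A)\otimes_A(-)=t_{\fF(A)}$, and since tensoring is right-exact, so is $\fF$. For the converse I would assume $\fF$ is right-exact and show $\alpha_N$ is an isomorphism for every finitely generated $N$. One first checks that $\alpha_A$ is an isomorphism, since $\alpha_A(x\otimes 1)=\fF(\id_A)(x)=x$ under the canonical identification $\fF(A)\otimes_A A\cong\fF(A)$; additivity of both $\fF$ and $\otimes_A$ then makes $\alpha_{A^n}$ an isomorphism for every finite free module, being a finite direct sum of copies of $\alpha_A$. Because $A$ is noetherian, $N$ is finitely presented, so I would fix a presentation $A^m\to A^n\to N\to 0$ and apply the two right-exact functors, obtaining from the naturality of $\alpha$ a commutative diagram with exact rows
\[\xymatrix{
\fF(A)\otimes_A A^m\ar[r]\ar[d]_{\alpha_{A^m}}&\fF(A)\otimes_A A^n\ar[r]\ar[d]_{\alpha_{A^n}}&\fF(A)\otimes_A N\ar[r]\ar[d]^{\alpha_N}&0\\
\fF(A^m)\ar[r]&\fF(A^n)\ar[r]&\fF(N)\ar[r]&0}\]
whose two left-hand verticals are isomorphisms. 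Since each row exhibits its right-hand term as the cokernel of the map immediately to its left, the map induced on cokernels---which is exactly $\alpha_N$---is an isomorphism.

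I expect the delicate step to be the construction and verification of $\alpha$, and in particular the $A$-bilinearity: this is precisely the point where $A$-linearity of $\fF$ (and not merely additivity) is used, through $\fF(a\cdot\id_A)=a\cdot\id_{\fF(A)}$. By contrast, the passage from free modules to an arbitrary finitely generated $N$ is routine once the diagram is in place, depending only on the noetherian hypothesis---which guarantees finite presentations---and on right-exactness, which yields the exact bottom row and lets $\alpha_N$ be read off as a map of cokernels.
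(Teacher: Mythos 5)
Your proof is correct, and your construction of $\alpha_N$ (via $x\otimes n\mapsto\fF(\lambda_n)(x)$) is exactly the formula the paper records in the remark following the proposition; the paper itself gives no proof, deferring to Hartshorne, whose argument is the same finite-presentation comparison you carry out. You are also right to flag that the $A$-balance of the map is where $A$-linearity of $\fF$ (not mere additivity) enters, which is the only genuinely delicate point.
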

\begin{remark}
Given an $A$-module $N$, the $A$-module homomorphism \[\alpha_N\colon\fF(A)\otimes_AN\to\fF(N)\] is defined by sending, for all $a\in\fF(A)$ and all $n\in N$, the element $a\otimes n$ to the element $\fF(s_{n})(a)$, where $s_{n}\colon A\to N$ is defined by $1\mapsto n$. 

We have that 
 ${\fF(A)\otimes_A(-)=t_{\operatorname{ev}_A(\fF)}(-)}$, so the proposition implies that there is a natural transformation $t_{\operatorname{ev}_A(-)}\to\id_{\mathcal{C}}$. For every module $M$ we have that $\operatorname{ev}_A(t_M)=t_M(A)=M$, showing that there is also a natural transformation $\id_{\moda}\to\operatorname{ev}_A(t_{(-)})$. One can in fact show that these natural transformations are units/counits of an adjunction between $t_{(-)}$ and~$\operatorname{ev}_A$. 
\end{remark}

Applying Proposition~\ref{prop:naturalmap} to the functor $h^M$ gives a morphism ${h^M(A)\otimes_A(-)\to h^M}$. Noting that ${h^M(A)\otimes_A(-)=t_{M^\ast}}$, we rewrite this morphism as $t_{M^\ast}\to h^M$. 
Moreover, given a versal map $M\to F$ we have that $F^\ast\to M^\ast$ is surjective. Tensoring is right-exact, which induces a surjection $t_{F^\ast}\twoheadrightarrow t_{M^\ast}$, and since $F$ is free of finite rank we have an isomorphism $t_{F^{\ast}}=h^F$. Thus, Proposition~\ref{prop:naturalmap} shows that a versal map $M\to F$ induces a commutative diagram
\begin{equation}\label{eq:faktofh}
\begin{split}
\xymatrix{t_{M^\ast}\ar[d]&t_{F^\ast}\ar@{=}[d]\ar@{->>}[l]\\
h^M&h^F\ar@{->>}[ul]\ar[l]}
\end{split}
\end{equation}
which we can dualize to get the commutative diagram:
\begin{equation}\label{eq:hinj}
\begin{split}
\xymatrix{h^{M^\ast}\ar@{^(->}[dr]&\\
t_M\ar[u]\ar[r]&t_F}
\end{split}
\end{equation}
Taking global sections gives that $h^{M^\ast}(A)=M^{\ast\ast}$, recovering the original factorization of Proposition~\ref{prop:versalgerrees}~\textit{(\ref{item:vers3})}. However, here we see that the fact that versal maps factor via the double dual is just a special case of taking global sections of a functor $h^{M^\ast}=(t_{M^\ast})^\vee$ with the \emph{two different duals} $\ast$ and $\vee$. With this realization we are able to state a generalization of Proposition~\ref{prop:versalgerrees}~\textit{(\ref{item:vers2})}. 

\begin{theorem}\label{thm:mainversal}
Let $A$ be a noetherian ring, let $M$ be a finitely generated $A$-module and let $F$ be a finitely generated and free $A$-module. Given a homomorphism $\phi\colon M\to F$, the following are equivalent:
\begin{enumerate}[(i)]
\item\label{item:1} $\phi$ is versal.
\item\label{item:2} $\phi^\ast\colon F^\ast\to M^\ast$ is surjective.
\item\label{item:3} $\im(h^F\to h^M)=\im(t_{M^\ast}\to h^M)$.
\item\label{item:4} $\im(t_M\to t_F)=\im(t_M\to h^{M^\ast})$.
\item\label{item:5} $h^{M^\ast}\to t_F$ is injective.
\end{enumerate}
\end{theorem}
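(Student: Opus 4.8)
The plan is to take the equivalence (i)$\Leftrightarrow$(ii) as given, since it is precisely Proposition~\ref{prop:versalgerrees}.\ref{item:vers2}, and to close the remaining conditions into the cycle (ii)$\Rightarrow$(v)$\Rightarrow$(iv)$\Rightarrow$(iii)$\Rightarrow$(ii). Everything will rest on two tools already in hand: the natural map $\alpha$ of Proposition~\ref{prop:naturalmap}, which for the functor $h^N$ is the map $t_{N^\ast}\to h^N$ and is an isomorphism when $N$ is free (the identification $t_{F^\ast}=h^F$); and the exact contravariant duality $\vee$ of Proposition~\ref{prop:dualityoffun}, under which $t_N^\vee=h^N$, the map $t_\phi\colon t_M\to t_F$ dualizes to $h^\phi\colon h^F\to h^M$, and $\alpha_{h^M}\colon t_{M^\ast}\to h^M$ dualizes to the canonical map $\beta_M\colon t_M\to h^{M^\ast}$ appearing in~\eqref{eq:hinj}.

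First I would record the single diagram underlying the whole argument. Naturality of $\alpha$ along $h^\phi\colon h^F\to h^M$ gives, for \emph{every} $\phi$, a commutative square with top edge $t_{\phi^\ast}\colon t_{F^\ast}\to t_{M^\ast}$ and with left edge $\alpha_{h^F}\colon t_{F^\ast}\to h^F$ an isomorphism; reading it off under $t_{F^\ast}=h^F$ gives $h^\phi=\alpha_{h^M}\circ t_{\phi^\ast}$. Dualizing this square by $\vee$ yields $t_\phi=\rho\circ\beta_M$ up to the isomorphism $(\alpha_{h^F})^\vee$, where $\rho\colon h^{M^\ast}\to t_F$ denotes the $\vee$-dual of $t_{\phi^\ast}$; this is exactly the factorization~\eqref{eq:hinj}. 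In particular the inclusions $\ker\beta_M\subseteq\ker t_\phi$ and $\im h^\phi\subseteq\im\alpha_{h^M}$ hold unconditionally, which is what makes the cycle biconditional rather than one-directional.

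With this the steps are short. For (ii)$\Rightarrow$(v): surjectivity of $\phi^\ast$ makes $t_{\phi^\ast}$ surjective by right-exactness of $\otimes$, hence its $\vee$-dual $\rho$ is injective, which is (v). For (v)$\Rightarrow$(iv): if $\rho$ is injective then $\ker t_\phi=\ker(\rho\circ\beta_M)=\ker\beta_M$. For (iii)$\Rightarrow$(ii): evaluating the equality of subfunctors of $h^M$ at $A$, where $\alpha_{h^M}(A)$ is the canonical isomorphism $M^\ast\to M^\ast$ and $h^\phi(A)=\phi^\ast$, turns (iii) into $\im\phi^\ast=M^\ast$, since images are computed pointwise.

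The step I would treat as the main obstacle is (iv)$\Rightarrow$(iii), which is where the exactness of $\vee$ does the real work. Applying $\vee$ to the short exact sequence $0\to\ker t_\phi\to t_M\to\im t_\phi\to0$ produces $0\to\im h^\phi\to h^M\to(\ker t_\phi)^\vee\to0$, and similarly $\beta_M$ yields $0\to\im\alpha_{h^M}\to h^M\to(\ker\beta_M)^\vee\to0$. Thus the two subfunctors $\ker t_\phi$ and $\ker\beta_M$ of $t_M$ coincide exactly when the two subfunctors $\im h^\phi$ and $\im\alpha_{h^M}$ of $h^M$ coincide, i.e.\ exactly when $\im(h^F\to h^M)=\im(t_{M^\ast}\to h^M)$; the care required is to keep these sub/quotient identifications—and the unconditional inclusions noted above—aligned under the contravariant $\vee$ so that equality transfers in both directions.
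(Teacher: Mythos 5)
Your argument is correct and follows essentially the same route as the paper: Proposition~\ref{prop:versalgerrees}.\ref{item:vers2} for (i)$\Leftrightarrow$(ii), the natural map of Proposition~\ref{prop:naturalmap} giving the unconditional factorization behind \eqref{eq:faktofh} and \eqref{eq:hinj}, the exact duality of Proposition~\ref{prop:dualityoffun} to pass between (iii) and (iv), and evaluation at $A$ to return to (ii). The only difference is organizational --- you close (ii)--(v) into a single cycle where the paper proves separate biconditionals hubbed at (ii) --- which changes nothing of substance.
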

\begin{proof} We prove this by showing the following equivalences: \emph{(\ref{item:1})}$\Leftrightarrow$\emph{(\ref{item:2})}, \emph{(\ref{item:2})}$\Leftrightarrow$\emph{(\ref{item:3})}, \emph{(\ref{item:3})}$\Leftrightarrow$\emph{(\ref{item:4})}, and \emph{(\ref{item:2})}$\Leftrightarrow$\emph{(\ref{item:5})}.

\noindent\emph{(\ref{item:1})}$\Leftrightarrow$\emph{(\ref{item:2})}:  
This is Proposition~\ref{prop:versalgerrees}~\textit{(\ref{item:vers2})}. 

\noindent\emph{(\ref{item:2})}$\Leftrightarrow$\emph{(\ref{item:3})}: Suppose $F^\ast\to M^\ast$ is surjective.
This gives the factorization \eqref{eq:faktofh},
from which it follows that $\im(h^F\to h^M)=\im(t_{M^\ast}\to h^M)$.

Conversely, suppose that $\im(h^F\to h^M)=\im(t_{M^\ast}\to h^M)$. Taking global sections, we get
\begin{equation*}\im(F^\ast\to M^\ast)=\im(M^\ast\to M^\ast)=M^\ast.\end{equation*}
That is, $F^\ast\to M^\ast$ is surjective. 

\noindent\emph{(\ref{item:3})}$\Leftrightarrow$\emph{(\ref{item:4})}: These are dual statements of each other by Proposition~\ref{prop:dualityoffun}.

\noindent\emph{(\ref{item:2})}$\Leftrightarrow$\emph{(\ref{item:5})}: That $F^\ast\to M^\ast$ is surjective is equivalent to $t_{F^\ast}\to t_{M^\ast}$ being surjective. Dualizing gives that the map $h^{M^\ast}\to t_F$ is injective.
\end{proof}

\begin{remark}
By Theorem~\ref{thm:mainversal},  a map $M\to F$ is versal if and only if the induced map $h^{M^\ast}\to t_F$ is injective. This is a result that does \emph{not} have an analogous statement in the category of modules. Indeed, taking global sections of \emph{(\ref{item:5})} gives an injection $M^{\ast\ast}\hookrightarrow F$, but, as we saw in Remark~\ref{ejversal}, an $A$-module homomorphism $M\to F$ need not be versal even though it factors as $M\to M^{\ast\ast}\hookrightarrow F$. Instead, this theorem shows that a versal map $M\to F$ is precisely a map that induces a factorization $t_M\to h^{M^\ast}\hookrightarrow t_F$.
\end{remark}

\section{Torsionless functors}\label{sec:tf}\label{sec:4}
Analogously to the definition of the torsionless quotient of a module from Section~\ref{sec:torsionless}, we will now consider the torsionless quotient in the category of coherent functors.
\begin{definition}
Given a finitely generated module $M$ we define the \emph{torsionless quotient functor of $M$} as the image of the canonical map $t_M\to h^{M^\ast}$ and denote it by ${r}_M$, that is, 
\[{r}_M=\im(t_M\to h^{M^\ast}).\]
\end{definition}
\begin{remark}\label{rem:2a2}
By Theorem~\ref{thm:puuh} it is clear that ${r}_M$ is a coherent functor for any finitely generated module $M$. Furthermore, we note that ${{r}_M(A)=\im(M\to M^{\ast\ast})=M^{tl}}$. However, in general, ${r}_M$ is neither given by $t_{M^{tl}}$ nor by the image of the map $t_M\to t_{M^{\ast\ast}}$.
By dualizing we have that ${r}_M^\vee=\im(t_{M^\ast}\to h^M)$, and, in particular, 
\[{r}_M^\vee(A)=\im(M^\ast\to M^\ast)=M^\ast.\]
Also, as the functor $t_M$ preserves surjections it follows that ${r}_M$ does as well.
\end{remark}

\begin{lemma}\label{lem:likamedtl}
If $M$ is a finitely generated module, then ${r}_M={r}_{M^{tl}}$.
\end{lemma}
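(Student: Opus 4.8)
The plan is to prove that $\fG_M = \fG_{M^{tl}}$ by exhibiting the two images inside $h^{M^\ast}$ and showing they coincide. The key observation is that the duals interact nicely: by Lemma~\ref{lem:tordual} we already know $M^\ast = (M^{tl})^\ast$, and hence $M^{\ast\ast} = (M^{tl})^{\ast\ast}$, so the two functors $h^{M^\ast}$ and $h^{(M^{tl})^\ast}$ are literally the same functor. Thus both $\fG_M$ and $\fG_{M^{tl}}$ are images of maps landing in the \emph{same} target $h^{M^\ast}$, namely the canonical maps $t_M \to h^{M^\ast}$ and $t_{M^{tl}} \to h^{M^\ast}$.

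First I would set up the commutative diagram induced by the surjection $\pi\colon M \twoheadrightarrow M^{tl}$. Applying the embedding $t_{(-)}$ gives a surjection $t_M \twoheadrightarrow t_{M^{tl}}$ (tensoring is right-exact, so $t$ preserves surjections, as already noted). The naturality of the canonical map $t_{(-)} \to h^{(-)^\ast}$ yields a commutative square
\[
\xymatrix{
t_M \ar@{->>}[r]\ar[d] & t_{M^{tl}} \ar[d]\\
h^{M^\ast} \ar@{=}[r] & h^{(M^{tl})^\ast}
}
\]
where the bottom map is the identity because $M^\ast = (M^{tl})^\ast$. Since the top arrow is surjective, the image of the left vertical map equals the image of the composite $t_M \to t_{M^{tl}} \to h^{(M^{tl})^\ast}$, which in turn equals the image of the right vertical map. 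Explicitly, $\im(t_M \to h^{M^\ast}) = \im(t_{M^{tl}} \to h^{M^\ast})$, i.e.\ $\fG_M = \fG_{M^{tl}}$.

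The one point requiring care is the identification that the bottom edge of the square is an isomorphism (here, an equality) of functors, not merely an isomorphism after evaluation at $A$. This follows because $M^\ast = (M^{tl})^\ast$ as $A$-modules (Lemma~\ref{lem:tordual}) forces $h^{M^\ast} = h^{(M^{tl})^\ast}$ as functors, the Yoneda embedding being fully faithful; and one must check the canonical natural transformation $t_{(-)} \to h^{(-)^\ast}$ is compatible with the dual map $M^\ast = (M^{tl})^\ast$. The main obstacle is therefore not any hard computation but rather verifying this naturality: that the square above genuinely commutes, so that passing to images is legitimate. Once the diagram is in place, the conclusion is immediate from the elementary fact that precomposing a map with a surjection does not change its image.
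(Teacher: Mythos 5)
Your proof is correct and follows essentially the same route as the paper: both rest on the surjection $t_M\twoheadrightarrow t_{M^{tl}}$, the identification $M^\ast=(M^{tl})^\ast$ from Lemma~\ref{lem:tordual}, and the resulting factorization $t_M\twoheadrightarrow t_{M^{tl}}\to h^{M^\ast}$, from which equality of images is immediate. Your version merely spells out the naturality of the square that the paper leaves implicit.
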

\begin{proof}
By Lemma~\ref{lem:tordual} we have a sequence $t_M\twoheadrightarrow t_{M^{tl}}\to h^{(M^{tl})^\ast}=h^{M^\ast}$ giving the result.
\end{proof}

\begin{lemma}
If $F$ is a free and finitely generated module, then ${r}_F=t_F$.
\end{lemma}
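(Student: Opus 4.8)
The plan is to show that when $F$ is free and finitely generated, the canonical map $t_F\to h^{F^\ast}$ is already injective, so that its image $\fG_F$ equals $t_F$. The key observation is that for a finitely generated free module $F$, the natural transformation of Proposition~\ref{prop:naturalmap} applied to $h^{F^\ast}$ identifies $t_{(F^\ast)^\ast}=t_F$ with $h^{F^\ast}$ as an isomorphism, since $h^{F^\ast}$ is right-exact in this case. More precisely, for finitely generated free $F$ we have the identity $t_F=h^{F^\ast}$ that was already used in the excerpt (in the discussion surrounding diagram~\eqref{eq:faktofh}, where freeness of $F$ gives $t_{F^\ast}=h^F$, and symmetrically $t_F=h^{F^\ast}$).

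First I would recall the general definition $\fG_F=\im(t_F\to h^{F^\ast})$. Then I would argue that the canonical map $t_F\to h^{F^\ast}$ is an isomorphism when $F$ is free of finite rank. The cleanest way is to invoke Proposition~\ref{prop:naturalmap}: the functor $h^{F^\ast}$ has $h^{F^\ast}(A)=\hom_A(F^\ast,A)=F^{\ast\ast}=F$, so the natural map $\alpha\colon h^{F^\ast}(A)\otimes_A(-)=t_F\to h^{F^\ast}$ is precisely the canonical map in the definition of $\fG_F$. Since $F$ is free of finite rank, $h^{F^\ast}=\hom_A(F^\ast,-)$ is exact, hence in particular right-exact, so Proposition~\ref{prop:naturalmap} guarantees that $\alpha$ is an isomorphism. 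Therefore $t_F\to h^{F^\ast}$ is an isomorphism and its image is all of $t_F$, giving $\fG_F=t_F$.

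Alternatively, and perhaps more transparently, I would note that $F^\ast$ is itself free of finite rank, so $h^{F^\ast}=\hom_A(F^\ast,-)$ is naturally isomorphic to $t_{F^{\ast\ast}}=t_F$, because for a finitely generated projective (here free) module $P$ one has $h^P=t_{P^\ast}$, as already recorded in the second example of Section~\ref{sec:verfunc}. Applying this with $P=F^\ast$ yields $h^{F^\ast}=t_{(F^\ast)^\ast}=t_F$, and under this identification the canonical map $t_F\to h^{F^\ast}$ becomes the identity, so $\fG_F=t_F$.

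I do not expect any serious obstacle here: the statement is essentially the remark that over a free module the two duals $\ast$ and $\vee$ collapse the distinction between $t_F$ and $h^{F^\ast}$. The only point requiring mild care is to confirm that the canonical map appearing in the definition of $\fG_F$ coincides, up to the identification $h^{F^\ast}=t_F$, with the identity rather than some other endomorphism; this follows from tracing through the explicit description of $\alpha$ in the remark after Proposition~\ref{prop:naturalmap}, and is routine.
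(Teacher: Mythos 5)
Your proof is correct and follows essentially the same route as the paper: the paper likewise observes that $h^{F^\ast}$ is right-exact when $F$ is free of finite rank, so Proposition~\ref{prop:naturalmap} makes the canonical map $t_F\to h^{F^\ast}$ an isomorphism, whence $\fG_F=t_F$. Your additional check that $\alpha$ is the canonical map of the definition is a reasonable (if implicit in the paper) point of care.
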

\begin{proof}
If $F$ is free and finitely generated then 
$t_F=h^{F^\ast}$, and the result follows.
\end{proof}

\begin{lemma}\label{lem:gfunctor}
The map $M\mapsto{r}_M$ naturally extends to a covariant functor ${{r}_{-}\colon \moda\to \mathcal{C}}$.
\end{lemma}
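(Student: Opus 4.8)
The plan is to realize the assignment $M\mapsto\fG_M$ as the image of a natural transformation between two covariant functors $\moda\to\mathcal{C}$, and then to invoke the functoriality of the epi--mono factorization in the abelian category $\mathcal{C}$. First I would record that both the source and the target of the canonical map are functors: the rule $M\mapsto t_M$ is the covariant right-exact embedding $\moda\subset\mathcal{C}$ already introduced, while $M\mapsto h^{M^\ast}$ is covariant as the composite of the two contravariant functors $M\mapsto M^\ast$ and $N\mapsto h^N$; by the examples above, both $t_M$ and $h^{M^\ast}$ are coherent, so both functors indeed land in $\mathcal{C}$.

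The crux is the second step: showing that the canonical map $t_M\to h^{M^\ast}$ is natural in $M$, i.e.\ that for every homomorphism $f\colon M\to N$ the square
\[\xymatrix{
t_M\ar[r]\ar[d]_{t_f}&h^{M^\ast}\ar[d]^{h^{f^\ast}}\\
t_N\ar[r]&h^{N^\ast}
}\]
commutes. The most economical route uses the duality of Proposition~\ref{prop:dualityoffun}. By construction, $t_M\to h^{M^\ast}$ is the $\vee$-dual of the map $t_{M^\ast}\to h^M$ of Proposition~\ref{prop:naturalmap}, which is the component at $h^M$ of the natural transformation $t_{\operatorname{ev}_A(-)}\to\id_{\mathcal{C}}$ noted in the text. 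Applying naturality of that transformation to the morphism $h^f\colon h^N\to h^M$ yields a commutative square relating $t_{N^\ast}\to h^N$ and $t_{M^\ast}\to h^M$ through $t_{f^\ast}$ and $h^f$; dualizing by the exact, involutive functor $\vee$, which interchanges $t_L$ with $h^L$ and sends $t_g\mapsto h^g$, $h^g\mapsto t_g$, turns this square into precisely the one displayed above. Alternatively, one can check commutativity directly on the explicit evaluation map $M\otimes_A N\to\hom_A(M^\ast,N)$, $m\otimes n\mapsto(\varphi\mapsto\varphi(m)n)$, which is visibly natural in both arguments.

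With the natural transformation $\eta\colon t_{(-)}\to h^{(-)^\ast}$ established, I would set $\fG=\im(\eta)$. For a morphism $f\colon M\to N$, the commutative square together with functoriality of the image in $\mathcal{C}$ (Theorem~\ref{thm:puuh} ensures $\fG_M,\fG_N\in\mathcal{C}$) produces a unique morphism $\fG_f\colon\fG_M\to\fG_N$ compatible with the surjections $t_M\twoheadrightarrow\fG_M$, $t_N\twoheadrightarrow\fG_N$ and the injections $\fG_M\hookrightarrow h^{M^\ast}$, $\fG_N\hookrightarrow h^{N^\ast}$: indeed $\ker(t_M\twoheadrightarrow\fG_M)=\ker(t_M\to h^{M^\ast})$ is carried into $\ker(t_N\to h^{N^\ast})$ by commutativity, so the composite $t_M\to t_N\twoheadrightarrow\fG_N$ factors through $\fG_M$. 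Preservation of identities and composition then follows from the uniqueness of these induced maps, that is, from the functoriality of the epi--mono factorization.

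The main obstacle is the naturality square of the second step; once that commutation is in hand, everything else is formal abelian-category bookkeeping. I expect the duality argument to be the cleaner of the two, since it reuses the adjunction-style transformation $t_{\operatorname{ev}_A(-)}\to\id_{\mathcal{C}}$ already available rather than requiring a fresh verification on elements.
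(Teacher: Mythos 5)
Your proposal is correct and follows essentially the same route as the paper: exhibit the commutative square relating $t_M\to h^{M^\ast}$ and $t_N\to h^{N^\ast}$ for a homomorphism $f\colon M\to N$, and let the functoriality of the epi--mono factorization in $\mathcal{C}$ produce $\fG_f$. The paper simply asserts this diagram and leaves the remaining verifications to "similar arguments," whereas you additionally justify the naturality of $t_{(-)}\to h^{(-)^\ast}$ (via the duality of Proposition~\ref{prop:dualityoffun} or the explicit evaluation map), which is a correct filling-in of a step the paper takes for granted.
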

\begin{proof}
Let $f\colon M\to N$ be a module homomorphism. This gives a commutative diagram
\[\xymatrix@R=1.3em {
t_M\ar[rr]\ar@{->>}[rd]\ar[dd]&&t_N\ar@{->>}[dr]\ar[dd]\\
&{r}_M\ar@{_(->}[dl]&&{r}_N\ar@{_(->}[dl]\\
h^{M^\ast}\ar[rr]&&h^{N^{\ast}}}\]
which induces a morphism ${r}_f\colon{r}_M\to{r}_N$. The other defining properties of a functor follow by similar arguments. 
\end{proof}

\begin{remark}
A morphism ${r}_M\to{r}_N$ induces a homomorphism ${M^{tl}\to N^{tl}}$ by taking global sections. Moreover, a morphism ${r}_M\to{r}_N$ is uniquely determined by the map $M^{tl}\to N^{tl}$ as the following result shows.
\end{remark}


\begin{proposition}\label{prop:tlmotmf}
Let $M$ and $N$ be finitely generated modules over $A$. Then, a morphism $u\colon{r}_M\to{r}_N$ is uniquely determined by the map $M^{tl}\to N^{tl}$ obtained by evaluating at $A$.
\end{proposition}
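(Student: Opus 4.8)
The plan is to reformulate the statement as the injectivity of the evaluation map $\hom(\fG_M,\fG_N)\to\hom(M^{tl},N^{tl})$, $\Lambda\mapsto\Lambda(A)$; saying that $\Lambda$ is \emph{uniquely determined} by the induced map on global sections is exactly saying that this evaluation is injective. The main tool will be the natural transformation $t_{\operatorname{ev}_A(-)}\to\id_{\mathcal{C}}$ supplied by Proposition~\ref{prop:naturalmap} and the remark following it, whose component at a coherent functor $\fF$ is a morphism $\alpha_\fF\colon t_{\fF(A)}\to\fF$ that is natural in $\fF$. Recalling from Remark~\ref{rem:2a2} that $\fG_M(A)=M^{tl}$, the relevant component reads $\alpha_{\fG_M}\colon t_{M^{tl}}\to\fG_M$.

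The key step I would isolate is the claim that $\alpha_{\fG_M}\colon t_{M^{tl}}\to\fG_M$ is \emph{surjective}. To prove it I would apply naturality of $\alpha$ to the canonical surjection $\pi\colon t_M\twoheadrightarrow\fG_M$ that defines $\fG_M$ as the image of $t_M\to h^{M^\ast}$. This produces the commutative square expressed by $\pi\circ\alpha_{t_M}=\alpha_{\fG_M}\circ t_{\pi(A)}$. Here $t_M$ is right-exact, so by Proposition~\ref{prop:naturalmap} the map $\alpha_{t_M}\colon t_M\to t_M$ is an isomorphism; and, images in $\mathcal{C}$ being computed pointwise, $\pi(A)\colon M\to M^{tl}$ is the canonical surjection, whence $t_{\pi(A)}$ is surjective. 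Therefore the composite $\pi\circ\alpha_{t_M}=\alpha_{\fG_M}\circ t_{\pi(A)}$ is surjective, which forces $\alpha_{\fG_M}$ to be surjective as well.

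Once surjectivity is established, the conclusion is formal. For any $\Lambda\colon\fG_M\to\fG_N$, naturality of $\alpha$ applied to $\Lambda$ gives $\Lambda\circ\alpha_{\fG_M}=\alpha_{\fG_N}\circ t_{\Lambda(A)}$. Hence if two morphisms $\Lambda_1,\Lambda_2$ agree on global sections, i.e.\ $\Lambda_1(A)=\Lambda_2(A)$, then $\Lambda_1\circ\alpha_{\fG_M}=\alpha_{\fG_N}\circ t_{\Lambda_1(A)}=\alpha_{\fG_N}\circ t_{\Lambda_2(A)}=\Lambda_2\circ\alpha_{\fG_M}$; since $\alpha_{\fG_M}$ is an epimorphism by the previous step, this yields $\Lambda_1=\Lambda_2$. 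Thus $\Lambda$ is uniquely determined by its evaluation $\Lambda(A)\colon M^{tl}\to N^{tl}$.

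The step I expect to be the main obstacle is the surjectivity of $\alpha_{\fG_M}$: morally, this says that the torsionless functor $\fG_M$ is generated, through the canonical natural map, by its global sections $M^{tl}$, and it is the only place where the specific construction $\fG_M=\im(t_M\to h^{M^\ast})$ enters rather than general properties of coherent functors. As a sanity check (and an alternative route) one could first reduce to the torsionless case $M=M^{tl}$ using Lemma~\ref{lem:likamedtl}, in which case $\pi(A)=\id_M$ and $\alpha_{\fG_M}$ is literally the defining surjection $t_M\twoheadrightarrow\fG_M$; the naturality argument above simply packages this observation so that no reduction is needed.
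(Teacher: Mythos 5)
Your argument is correct and follows essentially the same route as the paper: both rest on the naturality of the map $\alpha\colon t_{\operatorname{ev}_A(-)}\to\id_{\mathcal{C}}$ from Proposition~\ref{prop:naturalmap} applied to $\Lambda$, together with the surjectivity of $\alpha_{\fG_M}\colon t_{M^{tl}}\to\fG_M$. The only difference is that you spell out why that surjectivity holds (via $\pi\circ\alpha_{t_M}=\alpha_{\fG_M}\circ t_{\pi(A)}$), a point the paper asserts more tersely ``by functoriality''.
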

\begin{proof}
Evaluating at $A$ gives a map $f\colon M^{tl}\to N^{tl}$. We need to show that $u={r}_f$. By the functoriality of the map from Proposition~\ref{prop:naturalmap}, we have a commutative diagram
\[\xymatrix{{r}_M(A)\otimes_A P\ar[r]^-{u_A\otimes\id}\ar[d]&{r}_N(A)\otimes_A P\ar[d]\\
{r}_M(P)\ar[r]^{u_P}&{r}_N(P)}\]
for any finitely generated module $P$. As ${r}_M(A)=M^{tl}$, ${r}_N(A)=N^{tl}$, and $u_A=f$, this reduces to the commutative diagram
\[\xymatrix{t_{M^{tl}}(P)\ar[r]^-{f\otimes\id}\ar@{->>}[d]&t_{N^{tl}}(P)\ar@{->>}[d]\\
{r}_M(P)\ar[r]^{u_P}&{r}_N(P)}\]
where, again by the functoriality of the map from Proposition~\ref{prop:naturalmap}, the vertical arrows are surjective. Thus, ${r}_f(P)=u_P$ for any module $P$.
\end{proof}

By this result we have that morphisms ${r}_M\to{r}_N$, of torsionless quotient functors, are equivalent to homomorphisms $M^{tl}\to N^{tl}$, of torsionless modules. From Lemma~\ref{lem:likamedtl}, it now follows that the full subcategory of torsionless quotient functors of $\mathcal{C}$ is equivalent to the full subcategory of torsionless modules of the category of finitely generated modules $\moda$. 

Even though these categories are equivalent, we end this section by showing that there is an advantage in working with the category of torsionless functors, rather than the category of torsionless modules. In particular, Corollary~\ref{cor:gandr} gives connections between torsionless functors and Rees algebras that do not exist between torsionless modules and Rees algebras.

\begin{proposition}\label{prop:is}
Let $f\colon M\to N$ be a homomorphism of finitely generated $A$-modules.
\begin{enumerate}[(i)]
\item\label{item:inj} The map ${r}_f\colon {r}_M\to {r}_N$ is injective if and only if $f^\ast\colon N^\ast\to M^\ast$ is surjective.
\item The map ${r}_f\colon {r}_M\to{r}_N$ is surjective if and only if the induced map ${M^{tl}\to N^{tl}}$ is surjective.
\end{enumerate} 
\end{proposition}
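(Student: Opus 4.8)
The plan is to treat the two parts in parallel, with part~(ii) proved directly and part~(i) reduced to a statement of the same shape via the exact duality $\vee$ of Proposition~\ref{prop:dualityoffun}. The common mechanism in both directions is that a morphism of functors is, by our conventions, surjective exactly when it is pointwise surjective, so it remains surjective after evaluating at $A$; conversely, surjectivity of a functor morphism can be tested by precomposing with a known surjection out of a functor of the form $t_{(-)}$.

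For part~(ii) I would argue as follows. For the \emph{only if} direction, if $\fG_f$ is surjective then it is pointwise surjective, so evaluating at $A$ yields a surjection $\fG_M(A)\to\fG_N(A)$; by Remark~\ref{rem:2a2} and Proposition~\ref{prop:tlmotmf} this map is precisely the induced homomorphism $M^{tl}\to N^{tl}$. For the converse, recall from Lemma~\ref{lem:likamedtl} that $\fG_M=\fG_{M^{tl}}$ and $\fG_N=\fG_{N^{tl}}$, and that by definition $t_{M^{tl}}\twoheadrightarrow\fG_M$ and $t_{N^{tl}}\twoheadrightarrow\fG_N$ are surjections. These fit into the commutative square already appearing in the proof of Proposition~\ref{prop:tlmotmf}, whose top arrow is $t$ applied to $M^{tl}\to N^{tl}$. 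If $M^{tl}\to N^{tl}$ is surjective then, since tensoring preserves surjections, so is this top arrow; composing with the right-hand surjection onto $\fG_N$ shows that the diagonal $t_{M^{tl}}\to\fG_N$ is surjective, and as it factors through $\fG_f$, the map $\fG_f$ is surjective.

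For part~(i) the key observation is that, since $\vee$ is exact and contravariant with $\vee\vee\cong\id$, the map $\fG_f$ is injective if and only if its dual $\fG_f^\vee\colon\fG_N^\vee\to\fG_M^\vee$ is surjective. By Remark~\ref{rem:2a2} we have $\fG_M^\vee=\im(t_{M^\ast}\to h^M)$, where the map is the natural map $\alpha$ of Proposition~\ref{prop:naturalmap}; dualizing the naturality square of $\alpha$ identifies $\fG_f^\vee$ as the morphism on images induced by $t_{f^\ast}\colon t_{N^\ast}\to t_{M^\ast}$, sitting under the canonical surjections $\pi_N\colon t_{N^\ast}\twoheadrightarrow\fG_N^\vee$ and $\pi_M\colon t_{M^\ast}\twoheadrightarrow\fG_M^\vee$. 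The argument now mirrors part~(ii): if $f^\ast$ is surjective then $t_{f^\ast}$ is surjective, so $\pi_M\circ t_{f^\ast}=\fG_f^\vee\circ\pi_N$ is surjective, forcing $\fG_f^\vee$ surjective and hence $\fG_f$ injective; conversely, if $\fG_f^\vee$ is surjective then evaluating at $A$ gives a surjection $\fG_N^\vee(A)\to\fG_M^\vee(A)$, that is $N^\ast\to M^\ast$, which I must identify with $f^\ast$.

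I expect the main obstacle to be precisely this last identification and its analogue in part~(ii): checking that evaluating the functorial maps at $A$ recovers the module-level maps $f^\ast$ and $M^{tl}\to N^{tl}$. The crucial point is that the natural map $\alpha$ of Proposition~\ref{prop:naturalmap} is an isomorphism when evaluated at $A$ (as $s_1\colon A\to A$ is the identity), so that $\pi_M(A)$ and $\pi_N(A)$ are isomorphisms $M^\ast\xrightarrow{\sim}\fG_M^\vee(A)$ and $N^\ast\xrightarrow{\sim}\fG_N^\vee(A)$ under which $\fG_f^\vee(A)$ becomes $f^\ast$; the remaining verifications amount to the routine bookkeeping of dualizing an image factorization. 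Everything else in the proof is formal manipulation of surjections together with the exactness of $\vee$.
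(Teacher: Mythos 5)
Your proof is correct and follows essentially the same route as the paper: both parts rest on the commutative diagram $t_{(-)^{tl}}\twoheadrightarrow\fG_{(-)}\hookrightarrow h^{(-)^\ast}$, with the forward implications obtained by evaluating at $A$ (after applying the exact dual $\vee$ in part (i)) and the converses by propagating surjectivity through the canonical surjections out of $t_{(-)}$. The only cosmetic difference is that for the converse of (i) you stay on the dual side, deducing surjectivity of $\fG_f^\vee$ from that of $t_{N^\ast}\to t_{M^\ast}$, whereas the paper undualizes and uses the injectivity of $h^{M^\ast}\to h^{N^\ast}$ directly; by exactness of $\vee$ these are the same argument.
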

\begin{proof}
As the map $t_M\to t_{M^{tl}}$ is surjective we have that the diagram
\begin{equation}\label{eq:injsur}
\begin{split}
\xymatrix{t_{M^{tl}}\ar@{->>}[r]\ar[d]&{r}_M\ar@{^(->}[r]\ar[d]&h^{M^\ast}\ar[d]\\ 
t_{N^{tl}}\ar@{->>}[r]&{r}_N\ar@{^(->}[r]&h^{N^\ast}}
\end{split}
\end{equation}
commutes.
\begin{enumerate}[(i)]
\item If ${r}_M\to {r}_N$ is injective then, dually, ${r}_N^\vee\to{r}_M^\vee$ is surjective. Taking global sections, we get that $N^\ast\to M^\ast$ is surjective. Conversely, if $N^\ast\to M^\ast$ is surjective, then $h^{M^\ast}\to h^{N^\ast}$ is injective, so the commutativity of diagram \eqref{eq:injsur} implies that ${r}_M\to{r}_N$ is injective.
\item Suppose that ${r}_M\to{r}_N$ is surjective. Then, taking global sections gives a surjection $M^{tl}\to N^{tl}$. Conversely, if $M^{tl}\twoheadrightarrow N^{tl}$ is surjective, 
then $t_{M^{tl}}\to t_{N^{tl}}$ is surjective, so the diagram~\eqref{eq:injsur} implies that ${r}_M\to{r}_N$ is surjective.
\qedhere\end{enumerate}
\end{proof}

An immediate consequence of combining the previous result with Propostion~\ref{prop:versalgerrees}~\textit{(\ref{item:vers2})} is the following.
\begin{corollary}
Let $M$ be a finitely generated $A$-module and let $F$ be a finitely generated and free $A$-module. Then, a homomorphism $M\to F$ is versal if and only if ${r}_M\to{r}_F$ is injective.
\end{corollary}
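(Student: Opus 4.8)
The plan is to chain two results already established in the excerpt. The statement to prove is the corollary asserting that, for $M$ finitely generated and $F$ finitely generated free, a homomorphism $M\to F$ is versal if and only if $\fG_M\to\fG_F$ is injective. The cleanest route is to route everything through the dual map on the level of modules, namely the surjectivity of $\phi^\ast\colon F^\ast\to M^\ast$, which serves as the common hinge.

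\begin{proof}
By Proposition~\ref{prop:is}.\ref{item:inj}, applied to the homomorphism $\phi\colon M\to F$, the morphism $\fG_\phi\colon\fG_M\to\fG_F$ is injective if and only if $\phi^\ast\colon F^\ast\to M^\ast$ is surjective. By Proposition~\ref{prop:versalgerrees}.\ref{item:vers2}, the map $\phi$ is versal if and only if $\phi^\ast\colon F^\ast\to M^\ast$ is surjective. Combining these two equivalences yields that $\phi$ is versal if and only if $\fG_M\to\fG_F$ is injective.
\end{proof}

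There is essentially no obstacle here, since the corollary is by design an immediate consequence of the two cited propositions: both characterize their respective conditions through the same module-level statement, surjectivity of $\phi^\ast$. The only point demanding care is to confirm that Proposition~\ref{prop:is}.\ref{item:inj} is genuinely applicable, i.e.\ that $F$ being free and finitely generated is not an extra hypothesis needed there; inspecting that proposition, its injectivity criterion holds for an arbitrary homomorphism $f\colon M\to N$ of finitely generated modules, so specializing $N=F$ is harmless. One might optionally remark, to make the geometry transparent, that since $\fG_F=t_F$ for free $F$ by the lemma preceding Lemma~\ref{lem:gfunctor}, the injection $\fG_M\hookrightarrow\fG_F$ is exactly the injection $\fG_M\hookrightarrow t_F$, matching the factorization $t_M\to h^{M^\ast}\hookrightarrow t_F$ of Theorem~\ref{thm:mainversal}.\ref{item:5}; but this is a remark rather than a step needed for the proof.
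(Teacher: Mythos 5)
Your proof is correct and matches the paper's own argument: the corollary is stated there as an immediate consequence of combining Proposition~\ref{prop:is}(i) with Proposition~\ref{prop:versalgerrees}(ii), both characterized by the surjectivity of $\phi^\ast\colon F^\ast\to M^\ast$. Nothing further is needed.
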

Similarly, combining Proposition~\ref{prop:is} with Proposition~\ref{prop:dsri} and Proposition~\ref{prop:tlsiffrs} gives:
\begin{corollary}\label{cor:gandr}
Let $M\to N$ be a homomorphism of finitely generated $A$-modules.
\begin{enumerate}[(i)]
\item If the induced map ${r}_M\to{r}_N$ is injective, then the induced algebra homomorphism ${\fR(M)\to\fR(N)}$ is injective.
\item The map ${r}_M\to{r}_N$ is surjective if and only if $\fR(M)\to\fR(N)$ is surjective.
\end{enumerate}
\end{corollary}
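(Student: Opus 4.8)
The plan is simply to concatenate the biconditionals and implications already established, since the corollary has been phrased precisely so that it falls out of Proposition~\ref{prop:is}, Proposition~\ref{prop:dsri}, and Proposition~\ref{prop:tlsiffrs}. Write $f\colon M\to N$ for the given homomorphism, inducing $\fG_M\to\fG_N$ and $\fR(M)\to\fR(N)$.

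For part (i), I would begin with the hypothesis that $\fG_M\to\fG_N$ is injective. By Proposition~\ref{prop:is}.\ref{item:inj} this is equivalent to the surjectivity of $f^\ast\colon N^\ast\to M^\ast$. Feeding this surjectivity into the second part of Proposition~\ref{prop:dsri} immediately yields that $\fR(M)\to\fR(N)$ is injective, which is the desired conclusion. Thus (i) is obtained by composing one biconditional with one implication.

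For part (ii), I would assemble a chain of equivalences. By the second statement of Proposition~\ref{prop:is}, surjectivity of $\fG_M\to\fG_N$ is equivalent to surjectivity of the induced map $M^{tl}\to N^{tl}$. By Proposition~\ref{prop:tlsiffrs}, this in turn is equivalent to surjectivity of $\fR(M)\to\fR(N)$. Composing the two biconditionals gives the claimed \emph{if and only if}.

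The only subtlety worth flagging is the asymmetry between the two parts: part (ii) is a full equivalence because every link in its chain is a biconditional, whereas part (i) is merely an implication because Proposition~\ref{prop:dsri} only guarantees one direction (surjectivity of $f^\ast$ forces injectivity of $\fR(M)\to\fR(N)$, with no converse asserted). Consequently I would not expect any genuine obstacle here; the real work has all been carried out in the cited propositions, and the proof reduces to bookkeeping the direction of each arrow and quoting the correct statements in the correct order.
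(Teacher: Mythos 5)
Your proof is correct and follows exactly the route the paper intends: the corollary is stated there as an immediate consequence of combining Proposition~\ref{prop:is} with Proposition~\ref{prop:dsri} and Proposition~\ref{prop:tlsiffrs}, and your argument supplies precisely that bookkeeping, including the correct observation that part (i) is only an implication because Proposition~\ref{prop:dsri} gives no converse.
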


\section{A functor from coherent functors to commutative algebras}\label{sec:functorphi}

In this section we construct a functor $\Phi\colon\mathcal{C}\to\mathbf{Alg}_A$ from the category of coherent functors to the category of finitely generated and graded $A$-algebras. This we do by first defining $\Phi(h^M)$ for every finitely generated module $M$. Then, for every coherent functor $\fF$, we fix a presentation $h^N\overset{u}{\to} h^M\to\fF\to0$, and let $\Phi(\fF)$ be the coequalizer $\coeq\bigl(\Phi(u),\Phi(0)\bigr)$. Finally, we show that this construction is independent of the choice of presentation of $\fF$. 

By Theorem~\ref{thm:intrinsicdef}, the Rees algebra of $M$ is equal to the image of the canonical map $\sym(M)\to\Gamma(M^\ast)^\vee$. 
For definitions and properties of the algebra of divided powers and its dual, we refer the reader to \cite{Roby1963}, \cite{Rydh} or \cite{jag1}. From the latter, we have the following result.
\begin{theorem}[{\cite[Theorem~3.9]{jag1}}]\label{thm:gammadualissym}
Let $M$ be a finitely generated module over $A$. Then, the canonical module homomorphism $M^\ast\to \Gamma(M)^\vee$, sending $M^\ast$ into the degree 1 part of $\Gamma(M)^\vee$,  which is $M^\ast$, induces a natural homomorphism of graded $A$-algebras
\[\sym(M^\ast)\to\Gamma(M)^\vee.\] If $M$ is free, then this map is an isomorphism.
\end{theorem}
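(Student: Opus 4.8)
The plan is to realize the map through the universal property of the symmetric algebra, and then to verify the isomorphism in the free case by an explicit computation with the divided-power monomial basis.

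First I would recall the coalgebra structure on $\Gamma(M)$: it is a graded, commutative and cocommutative bialgebra, whose comultiplication $\Delta$ is determined on a divided power by $\Delta(x^{[n]})=\sum_{i+j=n}x^{[i]}\otimes x^{[j]}$. Dualizing degreewise, the comultiplication endows the graded dual $\Gamma(M)^\vee=\bigoplus_{d\geq0}(\Gamma^d(M))^\ast$ with the structure of a graded $A$-algebra: the product of $\phi\in(\Gamma^a(M))^\ast$ and $\psi\in(\Gamma^b(M))^\ast$ is the composite $(\phi\otimes\psi)\circ\Delta$, viewed in $(\Gamma^{a+b}(M))^\ast$. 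Coassociativity and cocommutativity of $\Delta$ translate into associativity and commutativity of this product, so $\Gamma(M)^\vee$ is a commutative graded $A$-algebra.

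Next, since $\Gamma^1(M)=M$, the degree-$1$ part of $\Gamma(M)^\vee$ is $(\Gamma^1(M))^\ast=M^\ast$, and the canonical inclusion $M^\ast\hookrightarrow\Gamma(M)^\vee$ of the degree-$1$ part is an $A$-module homomorphism into a commutative $A$-algebra. The universal property of $\sym$ then yields a unique graded $A$-algebra homomorphism $\sym(M^\ast)\to\Gamma(M)^\vee$ extending it, which is the asserted map. Naturality I would check by functoriality: a homomorphism $f\colon M\to N$ induces a bialgebra map $\Gamma(f)\colon\Gamma(M)\to\Gamma(N)$, hence a graded algebra map $\Gamma(N)^\vee\to\Gamma(M)^\vee$ upon dualizing; the resulting square with $\sym(f^\ast)$ commutes in degree $1$ (both composites restrict to $f^\ast$ on $N^\ast$), and since $\sym(N^\ast)$ is generated as an algebra in degree $1$, the two graded-algebra homomorphisms it defines must agree.

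Finally, for the free case I would fix a basis $e_1,\dots,e_n$ of $M$ with dual basis $e_1^\ast,\dots,e_n^\ast$ of $M^\ast$. Then $\Gamma^d(M)$ is free on the divided monomials $e^{[b]}=e_1^{[b_1]}\cdots e_n^{[b_n]}$ with $|b|=d$, and $\sym^d(M^\ast)$ is free on the monomials $(e^\ast)^a$ with $|a|=d$. Using $\Delta(e^{[d]})=\sum_{p+q=d}e^{[p]}\otimes e^{[q]}$, a multi-index sum obtained by multiplying the one-variable formulas, I would compute that the product in $\Gamma(M)^\vee$ of the dual basis vectors indexed by $a$ and $c$ evaluates on $e^{[d]}$ to $\delta_{a+c,\,d}$, so it equals the dual basis vector indexed by $a+c$. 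By induction this shows that the algebra map sends the monomial $(e^\ast)^a$ precisely to the dual basis vector of $\Gamma^{|a|}(M)$ indexed by $a$, hence sends a basis to a basis in each degree and is an isomorphism. I expect the main obstacle to be exactly this last bookkeeping step: correctly identifying the algebra structure on $\Gamma(M)^\vee$ as the one dual to $\Delta$, and verifying that the dual-basis multiplication it produces matches ordinary monomial multiplication in $\sym(M^\ast)$. Everything else — existence via the universal property and naturality — is formal once the coalgebra structure and the degree-$1$ identification are in place.
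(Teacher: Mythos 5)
Your argument is correct. Note first that the paper itself gives no proof of this statement: it is quoted from Theorem~3.9 of the cited reference \cite{jag1}, so there is nothing in this text to compare your proof against line by line. That said, what you propose is the standard (and essentially the only reasonable) argument: $\Gamma(M)$ is a graded bialgebra with $\Delta(x^{[n]})=\sum_{i+j=n}x^{[i]}\otimes x^{[j]}$, the degreewise dual of $\Delta$ makes $\Gamma(M)^\vee$ a commutative graded $A$-algebra, the identification $(\Gamma^1(M))^\ast=M^\ast$ plus the universal property of $\sym$ produces the map, and naturality follows because both composites in the relevant square agree in degree $1$ while $\sym(N^\ast)$ is generated there. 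Your free-case computation is also right: for a basis $e_1,\dots,e_n$ one has $\Delta(e^{[d]})=\sum_{p+q=d}e^{[p]}\otimes e^{[q]}$ with no multinomial coefficients (those only arise when multiplying divided powers of the \emph{same} element), so the dual basis vectors multiply by $(e^{[a]})^\ast\cdot(e^{[c]})^\ast=(e^{[a+c]})^\ast$ and the monomial basis of $\sym^d(M^\ast)$ is carried bijectively onto the dual basis of $\Gamma^d(M)^\ast$. The one point worth making explicit in a written version is that the product on $\Gamma(M)^\vee$ only requires the map $(\Gamma^a M)^\ast\otimes(\Gamma^b M)^\ast\to(\Gamma^a M\otimes\Gamma^b M)^\ast$ in the direction that always exists, so no projectivity hypothesis is needed to define the algebra structure; freeness enters only in the final basis count.
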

By Remark~3.10 of \cite{jag1}, we also note that $\Gamma(M)^\vee$ is generally \emph{not} generated in degree~$1$, and is therefore hard to work with. Instead, we will consider the $A$-algebra
\[\mathcal{Q}^{\operatorname{op}}(M):=\im\bigl(\sym(M^\ast)\to\Gamma(M)^\vee\bigr),\]
which is the largest subring of $\Gamma(M)^\vee$ that is generated in degree $1$. Furthermore, for any finitely generated $A$-module $M$, we define 
\[\mathcal{Q}(M):=\mathcal{Q}^{\operatorname{op}}(M^\ast)=\im\bigl(\sym(M^{\ast\ast})\to\Gamma(M^\ast)^\vee\bigr).\] 
\begin{lemma}\label{lem:stillrees}
The Rees algebra $\fR(M)$ of a finitely generated $A$-module $M$ is equal to the image of the canonical map $\sym(M)\to\mathcal{Q}(M)$. When $M$ is reflexive there is even an equality ${\fR(M)=\mathcal{Q}(M)}$.
\end{lemma}
\begin{proof}
From Theorem~\ref{thm:intrinsicdef}, we have that $\fR(M)=\im\bigl(\sym(M)\to\Gamma(M^\ast)^\vee\bigr)$. By the universal property of the symmetric algebra there is a factorization \[\sym(M)\to\sym(M^{\ast\ast})\to\Gamma(M^\ast)^\vee,\] 
from which it follows that the image of $\sym(M)\to\Gamma(M^\ast)^\vee$ lies within the image of $\sym(M^{\ast\ast})\to\Gamma(M^\ast)^\vee$. 
Thus
\[\fR(M)=\im\bigl(\sym(M)\to\Gamma(M^\ast)^\vee\bigr)=\im\bigl(\sym(M)\to\mathcal{Q}(M)\bigr).\]
If $M$ is reflexive, then
\[\mathcal{Q}(M)=\im\bigl(\sym(M^{\ast\ast})\to\Gamma(M^\ast)^\vee\bigr)= \im\bigl(\sym(M)\to\Gamma(M^{\ast})^\vee\bigr)=\fR(M).\qedhere\]
\end{proof}

\begin{theorem}\label{thm:thereisafunctor}
There is a functor $\Phi\colon\mathcal{C}\to\mathbf{Alg}_A$ such that $h^M\mapsto \mathcal{Q}^{\operatorname{op}}(M)$.
\end{theorem}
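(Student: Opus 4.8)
The plan is to build $\Phi$ in three stages: first make $M\mapsto\gamma(M)$ into a functor on modules, then transport it to the representable functors through the Yoneda embedding, and finally extend it to all coherent functors by a coequalizer, the one genuinely delicate point being that $\mathbf{Alg}_A$ is not additive.

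\textbf{The functor $\gamma$ on modules.} First I would check that $\gamma$ is a contravariant functor $\moda\to\mathbf{Alg}_A$. A homomorphism $\rho\colon M\to N$ induces, by functoriality of $(-)^\ast$, $\sym$, $\Gamma$ and the exact graded dual, a commuting square relating the two canonical maps $\sym(N^\ast)\to\Gamma(N)^\vee$ and $\sym(M^\ast)\to\Gamma(M)^\vee$ of Theorem~\ref{thm:gammadualissym}, with vertical maps $\sym(\rho^\ast)$ and $\Gamma(\rho)^\vee$; passing to images yields a graded $A$-algebra homomorphism $\gamma(\rho)\colon\gamma(N)\to\gamma(M)$, and functoriality ($\gamma(\id)=\id$, $\gamma(\rho\sigma)=\gamma(\sigma)\gamma(\rho)$) is immediate. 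Two facts will carry the whole argument: $\gamma(N)$ is generated in degree $1$, where it coincides with $N^\ast$, and on this degree-$1$ piece $\gamma(\rho)$ is simply $\rho^\ast$ and hence \emph{additive}.

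\textbf{Transport and extension.} Composing with the contravariant Yoneda embedding $M\mapsto h^M$ turns $\gamma$ into a covariant assignment on representables with $\Phi(h^M)=\gamma(M)$, a morphism $r\colon h^N\to h^M$ (corresponding to a module map $M\to N$) going to $\Phi(r)\colon\gamma(N)\to\gamma(M)$. For a general $\fF\in\mathcal{C}$ I fix a presentation $h^N\xrightarrow{r}h^M\to\fF\to0$, so $\fF=\coeq(r,0)$, and define $\Phi(\fF)=\coeq(\Phi(r),\Phi(0))$, computed in $\mathbf{Alg}_A$ as a quotient by the ideal of differences. Computing the zero morphism shows $\Phi(0)$ is the augmentation $\gamma(N)\twoheadrightarrow A$ composed with the unit $A\hookrightarrow\gamma(M)$, so that
\[\Phi(\fF)=\gamma(M)/J_r,\qquad J_r=\bigl(\Phi(r)(\gamma(N)_+)\bigr),\]
a homogeneous ideal; hence $\Phi(\fF)$ is a finitely generated graded $A$-algebra.

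\textbf{Morphisms and the obstacle.} To define $\Phi$ on a morphism $\psi\colon\fF\to\fF'$, I use that $h^M$ and $h^N$ are projective to lift $\psi$ to a commuting ladder $(a\colon h^M\to h^{M'},\,b\colon h^N\to h^{N'})$; since $\Phi(b)$ is a graded algebra map it sends $\gamma(N)_+$ into $\gamma(N')_+$, and the ladder then forces $\Phi(a)(J_r)\subseteq J_{r'}$, so $\Phi(a)$ descends to $\Phi(\psi)\colon\Phi(\fF)\to\Phi(\fF')$. The hard part is independence of the lift: if $(a_1,b_1)$ and $(a_2,b_2)$ both lift $\psi$, then $a_1-a_2$ factors through $\im r'$, which translates to $\alpha_1-\alpha_2=\sigma\rho'$ for the associated module maps. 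Since $\mathbf{Alg}_A$ is not additive I cannot subtract $\Phi(a_1)$ and $\Phi(a_2)$ directly; instead I compare them on the degree-$1$ generators of $\gamma(M)$, where $\gamma$ is additive: for $\xi\in M^\ast$ one gets $\gamma(\alpha_1)(\xi)-\gamma(\alpha_2)(\xi)=(\rho')^\ast(\sigma^\ast\xi)=\Phi(r')(\sigma^\ast\xi)\in J_{r'}$, so the two induced algebra maps agree on generators modulo $J_{r'}$ and therefore coincide. This degree-$1$ reduction, bridging the additive category $\mathcal{C}$ and the non-additive $\mathbf{Alg}_A$, is the main obstacle of the proof. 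Independence of the lift then yields functoriality by composing ladders, applying it to $\id_\fF$ for two presentations shows $\Phi(\fF)$ is independent of the chosen presentation, and the trivial presentation $0\to h^M\xrightarrow{\id}h^M\to0$ recovers $\Phi(h^M)=\gamma(M)$.
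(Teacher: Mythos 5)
Your proof is correct, and its overall architecture coincides with the paper's: define $\Phi(h^M)=\gamma(M)$ via the contravariant functoriality of $\gamma$, extend to all of $\mathcal{C}$ by fixing a presentation $h^N\overset{r}{\to}h^M\to\fF\to0$ and setting $\Phi(\fF)=\coeq\bigl(\Phi(r),\Phi(0)\bigr)$, then check independence of the lift of a morphism and of the chosen presentation. Where you genuinely diverge is in the treatment of the non-additivity of $\mathbf{Alg}_A$, which you correctly identify as the crux. The paper isolates this in Lemma~\ref{lem:detsomfixar}: it replaces the pair $(r,0)$ by the pair $\pi_1=p_1+r\circ p_2$, $\pi_2=p_1$ out of $h^{M\oplus N}$ without changing the coequalizer, and then, given two lifts $f_1,g_1$ of a morphism with homotopy $l$, exhibits an explicit element $y=\Phi(j_1\circ g_1+j_2\circ l)(x)$ satisfying $\Phi(\pi_1)(y)=\Phi(f_1)(x)$ and $\Phi(\pi_2)(y)=\Phi(g_1)(x)$, so that the two maps are identified in the coequalizer. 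You instead work with the quotient description $\Phi(\fF')=\gamma(M')/J_{r'}$ and verify that the two induced algebra maps agree on the degree-$1$ generators $M^\ast$, where $\gamma$ is additive and the homotopy relation $\alpha_1-\alpha_2=\sigma\rho'$ places the difference of values inside $J_{r'}$; since algebra homomorphisms agreeing on a generating set coincide, this suffices. Both arguments rest on exactly the same two facts --- that $\gamma(M)$ is generated in degree $1$ and that $\gamma$ restricted to degree $1$ is $\rho\mapsto\rho^\ast$ --- so they are equivalent in substance. Yours dispenses with the auxiliary lemma and is slightly more direct; the paper's formulation stays entirely in coequalizer language, which makes the later observation that $\Phi$ preserves coequalizers immediate.
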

To give some structure to the proof, we break it down to a few lemmas. Given any finitely generated module $M$, we let 
$\Phi(h^M)= \mathcal{Q}^{\operatorname{op}}(M)$. For every coherent functor $\fF$, we fix a projective resolution
\[h^N\overset{u}{\to} h^M\to\fF\to0,\]
so that $\fF=\coker(u)=\coeq(u,0)$. Then, we define $\Phi(\fF)=\coeq\bigl(\Phi(u),\Phi(0)\bigr)$. That this makes $\Phi$ into a well defined functor is proved by the following results.

\begin{lemma}\label{lem:detsomfixar}
Let $u\colon h^N\to h^M$ be a map of coherent functors, and let $p_1\colon h^{M\oplus N}\to h^M$ and $p_2\colon h^{M\oplus N}\to h^N$ denote the projections. Then, $\coeq\bigl(\Phi(u),\Phi(0)\bigr)=\coeq\bigl(\Phi(\pi_1),\Phi(\pi_2)\bigr)$, where $\pi_1,\pi_2\colon h^{M\oplus N}\to h^M$ are defined by $\pi_1=p_1+u\circ p_2$ and $\pi_2=p_1$.
\end{lemma}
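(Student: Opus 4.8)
The plan is to pull the whole statement back to the module side through the Yoneda embedding and then to exploit that, although $\Phi$ is badly non-additive, its restriction to degree $1$ is ordinary $A$-linear dualization, which \emph{is} additive. First I would set up the dictionary. Under the identification $h^{M\oplus N}=h^M\oplus h^N$, the projections $p_1,p_2$ correspond to the inclusions $\iota_M\colon M\to M\oplus N$ and $\iota_N\colon N\to M\oplus N$, while $r$ corresponds to a module homomorphism $\rho\colon M\to N$ with $r=\hom_A(\rho,-)$. Composing these, $\pi_1=p_1+r\circ p_2$ corresponds to the module map $\mu_1\colon M\to M\oplus N$, $m\mapsto(m,\rho(m))$, and $\pi_2=p_1$ corresponds to $\mu_2=\iota_M\colon m\mapsto(m,0)$.

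Next I would unwind both coequalizers inside $\mathbf{Alg}_A$. Because $\Phi(h^K)=\gamma(K)$ and $\gamma$ is contravariant on modules, the four relevant maps are $\Phi(r)=\gamma(\rho)$, $\Phi(0)=\gamma(0)\colon\gamma(N)\to\gamma(M)$ and $\Phi(\pi_1)=\gamma(\mu_1)$, $\Phi(\pi_2)=\gamma(\mu_2)\colon\gamma(M\oplus N)\to\gamma(M)$. The coequalizer of a pair of graded $A$-algebra homomorphisms $f,g\colon B\to C$ is $C/I$ with $I$ the ideal generated by all $f(b)-g(b)$. Thus the lemma reduces to the equality of two ideals of $\gamma(M)$: the ideal $I_1$ generated by the elements $\gamma(\rho)(z)-\gamma(0)(z)$, and the ideal $I_2$ generated by the elements $\gamma(\mu_1)(y)-\gamma(\mu_2)(y)$.

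The key reduction is that each $\gamma(K)$ is generated as an $A$-algebra in degree $1$, where it equals $K^\ast$. From the identity $f(b'b'')-g(b'b'')=f(b')\bigl(f(b'')-g(b'')\bigr)+\bigl(f(b')-g(b')\bigr)g(b'')$ it follows that $I_1$ and $I_2$ are already generated by their degree-$1$ differences. In degree $1$ the functor $\gamma$ sends a module map to its $A$-dual, so $\gamma(\rho)|_1=\rho^\ast$, $\gamma(0)|_1=0$, and on $(M\oplus N)^\ast=M^\ast\oplus N^\ast$ one computes $\gamma(\mu_1)|_1(\xi,\eta)=\xi+\rho^\ast(\eta)$ and $\gamma(\mu_2)|_1(\xi,\eta)=\xi$. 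Hence the degree-$1$ differences for both pairs sweep out exactly $\rho^\ast(N^\ast)\subseteq M^\ast$, the $\xi$-term cancelling in the second pair, so $I_1=I_2=\rho^\ast(N^\ast)\cdot\gamma(M)$ and the coequalizers coincide.

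The main obstacle is exactly the non-additivity of $\Phi$: one cannot split $\gamma(\mu_1)$ along the decomposition $\mu_1=\iota_M+\iota_N\circ\rho$, since $\gamma$, being built from $\sym$ and $\Gamma(-)^\vee$, does not respect sums of module maps. The argument circumvents this by pushing the comparison down to degree $1$, where the operation is honest dualization and therefore additive, and by using that $\gamma(K)$ is generated in that single degree so that nothing is lost. The one point to check with care is that the canonical degree-$1$ map underlying Theorem~\ref{thm:gammadualissym} is an isomorphism, so that $\gamma(K)_1=K^\ast$ and the displayed degree-$1$ components are indeed the correct ones.
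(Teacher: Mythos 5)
Your proof is correct and follows essentially the same route as the paper: both arguments reduce the comparison of the two coequalizers to the ideals generated by the degree-$1$ differences, using that $\gamma(K)$ is generated in degree $1$ where it equals $K^\ast$, and then observe that both pairs of maps produce exactly $\rho^\ast(N^\ast)$ there. Your explicit justification of the reduction to degree-$1$ generators via the identity $f(b'b'')-g(b'b'')=f(b')\bigl(f(b'')-g(b'')\bigr)+\bigl(f(b')-g(b')\bigr)g(b'')$ is a detail the paper leaves implicit, but the argument is the same.
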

\begin{proof}
The coequalizer of the two maps $\Phi(u),\Phi(0)\colon\mathcal{Q}^{\operatorname{op}}(N)\to\mathcal{Q}^{\operatorname{op}}(M)$ is equal to the graded $A$-algebra $\mathcal{Q}^{\operatorname{op}}(M)/I$ where $I$ is the ideal generated by all elements of the form \[\Phi(u)(x)-\Phi(0)(x)\] for all $x\in\mathcal{Q}^{\operatorname{op}}(N)$. As $\mathcal{Q}^{\operatorname{op}}(N)$ is generated in degree $1$, it follows that $I$ is generated by $\Phi(u)(x)$ for all $x$ in the degree~$1$ part of $\mathcal{Q}^{\operatorname{op}}(N)$, which is $N^\ast$. Similarly, the coequalizer of $\Phi(\pi_1)$ and $\Phi(\pi_2)$ is $\mathcal{Q}^{\operatorname{op}}(M)/J$, where $J$ is the ideal generated by elements \[\Phi(\pi_1)(x,y)-\Phi(\pi_2)(x,y)\] for all $(x,y)$ in $M^\ast\oplus N^\ast$. For every ${(x,y)\in M^\ast\oplus N^\ast}$ we have that 
\[\Phi(\pi_1)(x,y)-\Phi(\pi_2)(x,y)=\Phi(u)(y),\] so $I=J$.
\end{proof}

\begin{lemma}\label{lem:phiavmorfier}
Consider a map $f\colon\fF\to\mathcal{G}$ of coherent functors. Then, there is a  natural map $\Phi(f)\colon\Phi(\fF)\to\Phi(\mathcal{G})$.
\end{lemma}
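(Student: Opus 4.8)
The plan is to lift $f$ to a morphism between the two \emph{fixed} projective presentations, apply $\Phi$ to the resulting maps of projectives, and then descend to the coequalizers. Write the fixed presentations as $h^{N}\overset{r}{\to}h^{M}\to\fF\to0$ and $h^{N'}\overset{r'}{\to}h^{M'}\to\fG\to0$, so that, by Lemma~\ref{lem:detsomfixar}, $\Phi(\fF)=\gamma(M)/I$ and $\Phi(\fG)=\gamma(M')/I'$, where $I$ (resp.\ $I'$) is the ideal generated by $\Phi(r)$ (resp.\ $\Phi(r')$) applied to the degree~$1$ part of $\gamma(N)$ (resp.\ $\gamma(N')$). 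Since $h^{M}$ is projective and $h^{M'}\to\fG$ is surjective, the composite $h^{M}\to\fF\overset{f}{\to}\fG$ lifts to a map $\tilde f\colon h^{M}\to h^{M'}$; and since $(h^{M}\to\fF)\circ r=0$, the composite $\tilde f\circ r$ lands in $\ker(h^{M'}\to\fG)=\im(r')$, so projectivity of $h^{N}$ yields a second lift $g\colon h^{N}\to h^{N'}$ with $r'\circ g=\tilde f\circ r$.

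Applying $\Phi$ — which on maps of projectives is just $\gamma$ of the corresponding module homomorphism, hence functorial by the naturality asserted in Theorem~\ref{thm:gammadualissym} — turns the identity $r'\circ g=\tilde f\circ r$ into $\Phi(r')\circ\Phi(g)=\Phi(\tilde f)\circ\Phi(r)$. From this I would deduce $\Phi(\tilde f)(I)\subseteq I'$: the ideal $I$ is generated by the elements $\Phi(r)(x)$ with $x$ in the degree~$1$ part $N^\ast$ of $\gamma(N)$, and since $\Phi(g)$ is a graded algebra map it carries such $x$ into the degree~$1$ part $(N')^\ast$ of $\gamma(N')$, whence $\Phi(\tilde f)(\Phi(r)(x))=\Phi(r')(\Phi(g)(x))\in I'$. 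Thus $\Phi(\tilde f)\colon\gamma(M)\to\gamma(M')$ descends to an algebra homomorphism $\Phi(f)\colon\Phi(\fF)\to\Phi(\fG)$, which is the candidate map.

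The main obstacle is well-definedness, since the lift $\tilde f$ is highly non-unique and, because $\Phi$ takes values in algebras rather than modules, it is \emph{not} additive — so the usual "two lifts differ by a homotopy" argument is unavailable. The observation that resolves this is that $\Phi(\fF)=\gamma(M)/I$ is generated in degree~$1$, with degree~$1$ part canonically identified with $\fF(A)$: indeed $(\gamma(M))_1=M^\ast$ by Theorem~\ref{thm:gammadualissym}, while $I_1=\im(r(A)\colon N^\ast\to M^\ast)$, so $(\Phi(\fF))_1=\coker(h^{N}(A)\to h^{M}(A))=\fF(A)$, and likewise for $\fG$. A graded $A$-algebra map out of an algebra generated in degree~$1$ is determined by its restriction to degree~$1$; and evaluating the lifting square at $A$ shows that the degree~$1$ component of $\Phi(\tilde f)$, namely $\tilde f(A)\colon M^\ast\to(M')^\ast$, descends modulo $I_1$ and $I'_1$ to exactly $f(A)\colon\fF(A)\to\fG(A)$, independently of the chosen $\tilde f$. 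Hence $\Phi(f)$ is the unique graded $A$-algebra homomorphism $\Phi(\fF)\to\Phi(\fG)$ restricting to $f(A)$ in degree~$1$, which establishes both that it is well defined and that, by functoriality of $(-)(A)$, the assignment $f\mapsto\Phi(f)$ is natural.
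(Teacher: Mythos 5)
Your proof is correct, but it reaches well-definedness by a genuinely different route than the paper. The paper also lifts $f$ to the fixed presentations and applies $\Phi$; to handle the non-uniqueness of the lift it extracts a homotopy $l$ with $r_2\circ l=f_1-g_1$ and then, because $\Phi$ is not additive, replaces the presentation of $\fG$ by the pair $\pi_1=p_1+r_2\circ p_2$, $\pi_2=p_1$ on $h^{P\oplus L}$ (via Lemma~\ref{lem:detsomfixar}), so that the two lifts $\Phi(f_1)$ and $\Phi(g_1)$ factor through a common element and induce the same map on coequalizers. You instead observe that $\Phi(\fF)=\gamma(M)/I$ is a graded $A$-algebra generated in degree~$1$ whose degree~$1$ part is $\coker\bigl(r(A)\bigr)=\fF(A)$, so that any descended map is the unique graded algebra homomorphism restricting to $f(A)$ in degree~$1$; this pins down $\Phi(f)$ independently of the lift. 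Both arguments ultimately rest on the same fact --- that $\gamma(M)$ is generated in degree~$1$, which is exactly what makes Lemma~\ref{lem:detsomfixar} work and what fails for $\Gamma(M)^\vee$, as the paper remarks --- but you use it directly rather than through the coequalizer-modification trick, and your version buys a canonical characterization of $\Phi(f)$ from which compatibility with composition is immediate. The supporting identifications you invoke, namely $(\gamma(M))_1=M^\ast$ and $\Phi(r)_1=r(A)$ via Theorem~\ref{thm:gammadualissym} and Yoneda, together with the pointwise surjectivity of $h^M\to\fF$ ensuring that the degree-$1$ component of the descended map is $f(A)$, all check out.
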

\begin{proof}
Let $h^N\to h^M\to\fF\to0$ and $h^L\to h^P\to\mathcal{G}\to0$ be fixed presentations of $\fF$ and~$\mathcal{G}$. Then, a map $\fF\to\mathcal{G}$ lifts to a map of complexes:
\[\xymatrix{
h^N\ar[r]^{u_1}\ar[d]^{f_2}&h^M\ar[r]\ar[d]^{f_1}&\fF\ar[r]\ar[d]^f&0\\
h^L\ar[r]^{u_2}&h^P\ar[r]^-p&\mathcal{G}\ar[r]&0}\]
Applying $\Phi$ to this diagram induces a map of coequalizers 
\[\Phi(\fF)=\coeq\bigl(\Phi(u_1),\Phi(0)\bigr)\to\coeq\bigl(\Phi(u_2),\Phi(0)\bigr)=\Phi(\mathcal{G}).\]
We need to show that this map is independent of the choice of $f_1,f_2$ as a lift of $f$. Given another lift $g_1,g_2$ of $f$,  we get a homotopy $l\colon h^M\to h^L$ such that $u_2\circ l=f_1-g_1$. Now, letting $p_1\colon h^{P\oplus L}\to h^P$ and $p_2\colon h^{P\oplus L}\to h^L$ denote the projections, we define $\pi_1=p_1+u_2\circ p_2$ and $\pi_2=p_1$ as in Lemma~\ref{lem:detsomfixar}. Then, we consider the new diagram:
\[\xymatrix{
h^{N}\ar[r]^{u_1} \ar@<0.5ex>[d]\ar@<-0.5ex>[d] &h^M\ar[r]\ar@<0.5ex>[d]^{f_1}\ar@<-0.5ex>[d]_{g_1}&\fF\ar[r]\ar[d]^f&0\\
h^{P\oplus L}\ar@<0.5ex>[r]^{\pi_1}\ar@<-0.5ex>[r]_{\pi_2}&h^P\ar[r]^{p}&\mathcal{G}\ar[r]&0}\]
Letting $j_1\colon h^P\to h^{P\oplus L}$ and $j_2\colon h^L\to h^{P\oplus L}$ denote the natural inclusions, we have that 
\[\pi_1\circ(j_1\circ g_1+j_2\circ l)=(p_1+u_2\circ p_2)\circ(j_1\circ g_1+j_2\circ l) = g_1+u_2\circ l=f_1\] and 
\[\pi_2\circ(j_1\circ g_1+j_2\circ l)=p_1\circ(j_1\circ g_1+j_2\circ l)=g_1.\] 
Applying $\Phi$ now gives $\Phi(f_1)=\Phi(\pi_1)\circ\Phi(j_1\circ g_1+j_2\circ l)$
and $\Phi(g_1)=\Phi(\pi_2)\circ\Phi(j_1\circ g_1+j_2\circ l)$.
Hence, for every $x\in\mathcal{Q}^{\operatorname{op}}(M)$, we see that the element \[y=\Phi(j_1\circ g_1+j_2\circ l)(x)\in\mathcal{Q}^{\operatorname{op}}(P\oplus L)\] has the property that $\Phi(\pi_1)(y)=\Phi(f_1)(x)$ and $\Phi(\pi_2)(y)=\Phi(g_1)(x)$. This shows that $\Phi(p)\circ\Phi(f_1)=\Phi(p)\circ\Phi(g_1)$. Using Lemma~\ref{lem:detsomfixar} we now get the diagram 
\[\xymatrix{
\mathcal{Q}^{\operatorname{op}}(N)\ar@<0.5ex>[r]^-{\Phi(u_1)}\ar@<-0.5ex>[r]_-{\Phi(0)} \ar@<0.5ex>[d]\ar@<-0.5ex>[d] &\mathcal{Q}^{\operatorname{op}}(M)\ar[r]\ar@<0.5ex>[d]^{\Phi(f_1)}\ar@<-0.5ex>[d]_{\Phi(g_1)}&\Phi(\fF)\ar[r]\ar@{-->}[d]&0\\
\mathcal{Q}^{\operatorname{op}}({P\oplus L})\ar@<0.5ex>[r]^-{\Phi(\pi_1)}\ar@<-0.5ex>[r]_-{\Phi(\pi_2)}&\mathcal{Q}^{\operatorname{op}}(P)\ar[r]^-{\Phi(p)}&\Phi(\mathcal{G})\ar[r]&0}\]
in which $\Phi(f_1)$ and $\Phi(g_1)$ induce the same map between $\Phi(\fF)$ and $\Phi(\mathcal{G})$. 
\end{proof}

\begin{proof}[Proof of Theorem~\ref{thm:thereisafunctor}]
As before, we set 
$\Phi(h^M)= \mathcal{Q}^{\operatorname{op}}(M)$ for every finitely generated module $M$, and we fix a projective resolution
\[h^N\overset{u}{\to} h^M\to\fF\to0,\]
for every coherent functor $\fF$. Then, we define $\Phi(\fF)=\coeq\bigl(\Phi(u),\Phi(0)\bigr)$. It remains to prove that this is independent of the choice of projective resolution. 

To show this, we take another projective resolution $h^L\to h^P\to\fF\to0$ of $\fF$. Then, we can lift the identity map $\fF\to\fF$ to a map of complexes:
\[\xymatrix@R=1.8em{
h^N\ar[r]^u\ar[d]&h^M\ar[r]\ar[d]^f&\fF\ar@{=}[d]\ar[r]&0\\
h^L\ar[r]^{u'}\ar[d]&h^P\ar[r]\ar[d]^g&\fF\ar@{=}[d]\ar[r]&0\\
h^N\ar[r]^u&h^M\ar[r]&\fF\ar[r]&0}\]
Thus, we have two lifts $\id,g\circ f\colon h^M\to h^M$. By the proof of Lemma~\ref{lem:phiavmorfier} these two lifts induce the same map $\Phi(\fF)\to\coeq\bigl(\Phi(u'),\Phi(0)\bigr)\to\Phi(\fF)$, 
and this map is the identity, so ${\Phi(\fF)=\coeq\bigl(\Phi(u'),\Phi(0)\bigr)}$. Hence, the definition of $\Phi$ is independent of the choice of projective resolution. That $\Phi$ is well defined on morphisms now follows from~Lemma~\ref{lem:phiavmorfier}.
\end{proof}

\begin{remark}
The functor $\Phi$ is not right-exact. It does preserve coequalizers, but not, in general, coproducts. Indeed, in Example~$1.15$ of \cite{jag1}, there is a \mbox{module}~$M$ such that ${\fR(M\oplus M)\neq\fR(M)\otimes_A\fR(M)}$. As that module is reflexive, we have by Lemma~\ref{lem:stillrees} that $\fR(M)=\mathcal{Q}(M)=\mathcal{Q}^{\operatorname{op}}(M^\ast)$, showing that $\mathcal{Q}^{\operatorname{op}}$ does not preserve coproducts.
\end{remark}

\begin{remark}
There is a reason for defining $\Phi(h^M)$ as $\mathcal{Q}^{\operatorname{op}}(M)$, and not as $\Gamma(M)^\vee$. 
Indeed, if we define $\Psi(h^M)=\Gamma(M)^\vee$ and consider a presentation $h^Q\overset{u}{\to} h^P\to h^M\to0$, then it is \emph{not} true that $\Gamma(M)^\vee=\coeq\bigl(\Psi(u),\Psi(0)\bigr)$. That is because Lemma~\ref{lem:detsomfixar} fails for $\mathcal{Q}^{\operatorname{op}}(M)$ replaced with $\Gamma(M)^\vee$, as the latter is not generated in degree~1.
\end{remark}

\begin{proposition}
Let $M$ be a finitely generated module over $A$. Then, ${\Phi(t_M)=\sym(M)}$.
\end{proposition}
\begin{proof}
Choosing a projective resolution $F_2\to F_1\to M\to0$ of $M$ gives a right-exact sequence $t_{F_2}\to t_{F_1}\to t_M\to0$. For projective modules it holds that $t_{F_i}=h^{F_i^\ast},$ so we get an exact sequence
\begin{equation}\label{eq:res}h^{F_2^\ast}\overset{u}{\to} h^{F_1^\ast}\to t_M\to0.\end{equation}
Since $F_1$ and $F_2$ are free, we have by Theorem~\ref{thm:gammadualissym} that $\sym(F_i)=\Gamma(F_i^\ast)^\vee$, 
and in particular ${\Phi\bigl(h^{F_i^\ast}\bigr)=\mathcal{Q}(F_i)= \sym(F_i)}$. Thus, applying $\Phi$ to \eqref{eq:res} gives the sequence
\[\xymatrix{
\sym(F_2)\ar@<0.5ex>[r]^{\Phi(u)}\ar@<-0.5ex>[r]_{\Phi(0)}&\sym(F_1)\ar[r]&\Phi(t_M)=\coeq\bigl(\Phi(u),\Phi(0)\bigr).}\]
As the symmetric algebra preserves coequalizers it follows that $\Phi(t_M)=\sym(M)$.
\end{proof}

Let us now consider a versal map $M\to F$. By combining the results stated above, we get the following commutative diagram:
\begin{equation}
\begin{split}
\xymatrix{
M\ar[r]\ar@{|->}[d]^{t_{-}}& M^{\ast\ast}\ar@{^(->}[r]& F\ar@{|->}[d]^{t_{-}}\\
t_M\ar[r]\ar@{|->}[d]^\Phi& h^{M^\ast}\ar@{^(->}[r]\ar@{|->}[d]^\Phi&t_F\ar@{|->}[d]^\Phi\\
\sym(M)\ar[r]&\mathcal{Q}(M)\ar@{^(->}[r]&\sym(F)}
\end{split}
\end{equation}
Using Lemma~\ref{lem:stillrees}, we conclude that the Rees algebra of $M$ is given by \[\fR(M)=\im\Bigl(\Phi\bigl(t_M\bigr)\to\Phi\bigl(h^{M^\ast}\bigr)\Bigr).\] 
That is, the Rees algebra of $M$ is induced from the canonical map $t_M\to h^{M^\ast}$ of coherent functors.
There is no functor from the category of modules with this property. Thus, some intrinsic properties of $M$ are better reflected in the category of coherent functors than in the category of $A$-modules.


Throughout this paper, we have seen many connections between the Rees algebra of a module $M$ and the torsionless quotient functor ${r}_M$. First of all, they are both given as images of canonical maps, 
\[{r}_M=\im\bigl(t_M\to h^{M^\ast}\bigr)\quad \text{and}\quad \fR(M)=\im\bigl(\sym(M)\to\Gamma(M^\ast)^\vee\bigr).\]
The results of this section show that also the second of these maps is induced from the canonical map $t_M\to h^{M^\ast}$.
Moreover, we showed in Sections~\ref{sec:1} and \ref{sec:2} that a versal map $M\to F$ factorizes as the composition 
\[M\twoheadrightarrow M^{tl}\hookrightarrow M^{\ast\ast}\hookrightarrow F.\]
This versal map induces a morphism $t_M\to t_F$ of coherent functors, and we showed in Sections~\ref{sec:3} and \ref{sec:4} that the previous factorization is a special case of taking global sections of the factorization given by the composition 
\[t_M\twoheadrightarrow{r}_M\hookrightarrow h^{M^\ast}\hookrightarrow t_F.\]
Analogously, we have shown that the induced map $\sym(M)\to\sym(F)$ factorizes as the composition
 \[\sym(M)\twoheadrightarrow\fR(M)\hookrightarrow\Gamma(M^\ast)^\vee\hookrightarrow\sym(F).\]
Note that $\Gamma(M^\ast)^\vee$ and $h^{M^\ast}=(t_{M^\ast})^\vee$ are both described by \emph{two different duals}. Due to the similarities of these last two factorizations, one could hope that $\Phi$ would map ${r}_M$ to $\fR(M)$, but that is not the case as the following result~shows. 

\begin{proposition}
Let $M$ be a finitely generated module over $A$. Then, $\Phi({r}_M)=\sym(M^{tl})$.
\end{proposition}
\begin{proof}
Let ${r}_M=\im\big(t_M\to h^{M^\ast}\big)$. Choose a versal map $M\to F$ and a surjection $E\to M$ from a free module $E$. Then, ${r}_M=\im(t_E\to t_F)$. Let $L=\coker(F^\ast\to E^\ast)$ so that $h^{L}=\ker(t_E\to t_F)$. 
In particular, we have that \[{r}_M=\im(t_E\to t_F)=\coker(f\colon h^L\to t_E).\] Thus, $\Phi({r}_M)=\coeq\bigl(\Phi(f),\Phi(0)\bigr)=\coeq\bigl(\mathcal{Q}^{\operatorname{op}}(L)\rightrightarrows\sym(E)\bigr)$. By definition, there is a surjection $\sym(L^\ast)\to \mathcal{Q}^{\operatorname{op}}(L)$, so it follows that 
\[\Phi({r}_M)=\coeq\bigl(\mathcal{Q}^{\operatorname{op}}(L)\rightrightarrows\sym(E)\bigr)=\coeq\bigl(\sym(L^\ast)\rightrightarrows\sym(E)\bigr).\] 
As the symmetric algebra preserves colimits, we get that
\begin{align*}\Phi({r}_M)=\coeq\bigl(\sym(L^\ast)\rightrightarrows\sym(E)\bigr)=\sym\bigl(\coeq(L^\ast\rightrightarrows E)\bigr)=\sym\bigl(\im(E\to F)\bigr).\end{align*}
Since $M^{tl}=\im(M\to F)=\im(E\to F)$ we conclude that $\Phi({r}_M)=\sym(M^{tl})$.
\end{proof}
\begin{remark}
From this result, we see that $\Phi$ does not preserve images. It is easy to see that $\Phi$ preserves surjections, but as it does not preserve images it can not preserve injections. It could be interesting to consider derived functors of $\Phi$ to see if any new structures can be found.
\end{remark}

\begin{remark}
Another interesting approach, that we leave for the future, would be to consider the following generalization. Let $\fF$ be a coherent functor. Dualizing the natural map of Proposition~\ref{prop:naturalmap} applied to $\fF^\vee$, gives a natural map $\fF\to h^{\fF^\vee(A)}$. Analogously to the above, we then get a candidate for a natural definition of the Rees algebra of a coherent functor as 
\[\fR(\fF)=\im\Bigl(\Phi\bigl(\fF\bigr)\to\Phi\bigl(h^{\fF^\vee(A)}\bigr)\Bigr).\]
\end{remark}
\bibliographystyle{alpha}
\bibliography{references}{}
\end{document}